\newtheorem{theorem}{Theorem}
\newtheorem{claim}{Claim}
\newtheorem{example}{Example}
\newtheorem{lemma}{Lemma}
\newtheorem{remark}{Remark}
\numberwithin{equation}{section}
\title{The Gibbons-Hawking ansatz over a wedge}
\author{Martin de Borbon}
\begin{document}

\begin{abstract}
	We  discuss the Ricci-flat `model metrics' on $\mathbf{C}^2$ with cone singularities along the conic \( \{zw=1\} \) constructed by Donaldson -\cite{DonaldsonKMCS}, Section 5- using the Gibbons-Hawking ansatz over wedges in \(\mathbf{R}^3\). In particular we describe their asymptotic behavior at infinity and compute their energies. 	
\end{abstract}	
\maketitle

\section{Introduction}

Fix \(0<\beta<1\). Let \(W\) be a wedge in \(\mathbf{R}^3\) of angle \(2\pi\beta\) delimited by two planes. Let \(p\) be a point in the interior of the wedge which is equidistant from the two faces and is located at distance \(1\) from the edge of \(W\). Take \(f\) to be the associated Green's function for the Laplacian with pole at \(p\) and zero normal derivative at the boundary of \(W\). The Gibbons-Hawking ansatz produces an hyperk\"ahler 4-manifold with boundary \(\overline{P}\) endowed with a circle action which preserves all the structure and has the closure of  \(W\) as its space of orbits. The rotation that takes one face of \(W\) to the other is lifted to an isometry \(F\) of \(\overline{P}\), fixing the points over the edge of \(W\). We identify points on the boundary of \(\overline{P}\) which correspond under \(F\) to obtain a smooth manifold \(P\) without boundary, endowed with a metric \(g_{RF}\) which has cone angle \(2\pi\beta\) in transverse directions to the points fixed by \(F\). The upshot is that the direction of the edge of \(W\) defines a global complex structure \(I\) on \(P\) with respect to which \(g_{RF}\) is K\"ahler; and the complex manifold \((P, I)\) is indeed a very familiar one.

\begin{theorem} \label{THEOREM}
	
\(g_{RF}\) defines a Ricci-flat K\"ahler metric on \(\mathbf{C}^2\); it is invariant under the \(S^1\)-action \( e^{i\theta} (z, w)= (e^{i\theta}z, e^{-i\theta}w) \), it has cone angle \(2 \pi \beta \) along the conic \( C= \{zw=1\} \) and its volume form is

\begin{equation*}
	\mbox{Vol}(g_{RF})= (\beta^2/2) |1-zw|^{2\beta -2} \Omega \wedge \overline{\Omega},
\end{equation*}
where \( \Omega = (1/ \sqrt{2}) dzdw \). Moreover

\begin{enumerate}
	\item \label{Item 1} At points on \(C\) it has cone singularities in a \(C^{\alpha}\) sense -as defined in \cite{DonaldsonKMCS}- with H\"older exponent \( \alpha =1 \) if \( 0< \beta \leq 1/2 \) and \( \alpha = (1/ \beta) -1 \) if \( 1/2< \beta <1 \)
	\item \label{Item 2} It is asymptotic to the Riemannian cone \(\mathbf{C}_{\beta} \times \mathbf{C}_{\beta} \) at rate \(-4\) if \( 0 < \beta \leq 1/2 \) and \(-2/ \beta \) if \( 1/2 < \beta <1 \)
	\item \label{Item 3} Its energy is finite and is given by
\begin{equation*}
	E(g_{RF}) = 1 - \beta^2
\end{equation*}

\end{enumerate}
	
\end{theorem}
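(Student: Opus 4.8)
\emph{Proof strategy.} The plan is to stay with the Gibbons--Hawking data throughout. Write $g_{RF}=V h+V^{-1}\eta^{2}$, with $h$ the flat metric on $\overline W\subset\mathbf R^{3}$, $V=f$ the normalised Neumann Green's function with pole $p$, and $\eta=d\tau+\alpha$ the connection $1$-form, $d\eta=\ast_{h}dV$; take Euclidean coordinates with the edge along the $x_{3}$-axis, $p$ at the symmetric mid-angle at distance $1$ from the edge and at $x_{3}=0$. The preferred complex structure $I$ is the one attached to the $x_{3}$-direction, with K\"ahler form $\omega_{I}=\eta\wedge dx_{3}+V\,dx_{1}\wedge dx_{2}$, and it descends through the $F$-identification because $F$ acts on the base as the rotation about the $x_{3}$-axis (fixing $dx_{3}$, $dx_{1}\wedge dx_{2}$, $V$) and its lift preserves $\eta$. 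Ricci-flatness and the K\"ahler property are then built into the ansatz, the $S^{1}$-action is the Gibbons--Hawking circle action, and the cone angle $2\pi\beta$ along $C$ appears transversally as the flat $2$-sector of angle $2\pi\beta$ with its two sides glued by $F$, rescaled by the positive bounded function $V|_{\mathrm{edge}}$. I will use the identification $(P,I)\cong\mathbf C^{2}$, under which $e^{i\theta}$ acts by $(z,w)\mapsto(e^{i\theta}z,e^{-i\theta}w)$, the point over $p$ is the fixed point $(0,0)$, $C=\{zw=1\}$, and the $I$-holomorphic function $\zeta:=x_{1}+ix_{2}$ satisfies $\zeta=(1-zw)^{\beta}$ for the branch normalised by $\zeta(p)=1$ (well defined on $\mathbf C^{2}$ because the $F$-gluing trivialises the monodromy around $C$).

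For the volume form I would compute directly. In the coordinates $(x_{1},x_{2},x_{3},\tau)$ one has $\det g=V^{2}$, so $\mathrm{Vol}(g_{RF})=V\,dx_{1}\wedge dx_{2}\wedge dx_{3}\wedge d\tau=V\,dx_{1}\wedge dx_{2}\wedge dx_{3}\wedge\eta$. Now $dx_{1}\wedge dx_{2}=\tfrac{i}{2}\,d\zeta\wedge d\bar\zeta=\tfrac{i\beta^{2}}{2}|1-zw|^{2\beta-2}(w\,dz+z\,dw)\wedge(\bar w\,d\bar z+\bar z\,d\bar w)$, which produces the factor $\beta^{2}|1-zw|^{2\beta-2}$. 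For the remaining piece I would use two standard Gibbons--Hawking identities: first $g(\partial_{\tau},\cdot)=V^{-1}\eta=I^{\ast}dx_{3}$ (the first equality from the form of $g$, the second because $x_{3}$ is the $S^{1}$-moment map), so $\eta=V\,I^{\ast}dx_{3}$ and $dx_{3}\wedge\eta=V\,dx_{3}\wedge I^{\ast}dx_{3}=-\tfrac{iV}{2}\,\mu\wedge\bar\mu$ with $\mu:=2\,\partial x_{3}$; second $|\nabla_{g}x_{3}|^{2}=V^{-1}$ (as $dx_{3}$ has unit $h$-norm), which combined with $\partial_{\tau}(x_{3})=0$ and $I\partial_{\tau}=\nabla_{g}x_{3}$ unwinds to $w\,\partial_{w}x_{3}-z\,\partial_{z}x_{3}=\tfrac12V^{-1}$. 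Hence $(w\,dz+z\,dw)\wedge\mu=2(w\,\partial_{w}x_{3}-z\,\partial_{z}x_{3})\,dz\wedge dw=V^{-1}\,dz\wedge dw$, the two powers of $V$ cancel, and one reads off $\mathrm{Vol}(g_{RF})=(\beta^{2}/2)|1-zw|^{2\beta-2}\,\Omega\wedge\bar\Omega$.

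Items (1) and (2) are the same harmonic expansion carried out at the two ends of the wedge, and the break at $\beta=\tfrac12$ has the same origin in both. Near the edge, expanding $V$ in eigenfunctions of the transverse Laplacian on $\mathbf C_{\beta}$ shows that the non-constant modes occur at orders $|v|^{\,i+2\beta j}$ ($i,j\ge0$, $v:=1-zw$), so $V-V|_{\mathrm{edge}}$ has leading order $\min(2\beta,1)$ in $|v|$, i.e.\ order $\min(2,1/\beta)$ in the transverse cone distance; substituting into $g_{RF}=Vh+V^{-1}\eta^{2}$ and reading this off in Donaldson's conical H\"older spaces \cite{DonaldsonKMCS} yields exponent $\alpha=\min(1,\tfrac1\beta-1)$, i.e.\ $\alpha=1$ for $0<\beta\le\tfrac12$ and $\alpha=\tfrac1\beta-1$ for $\tfrac12<\beta<1$. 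At infinity, expand $V$ in spherical harmonics of the link $S^{2}(\beta,\beta)$ (a sphere with two cone points of angle $2\pi\beta$): the leading term $a/r$ produces the cone $\mathbf C_{\beta}\times\mathbf C_{\beta}$, the $L=1$ mode — which would decay only like $r^{-2}$ — is killed by the symmetry $x_{3}\mapsto -x_{3}$ of the configuration, and the first surviving correction sits at $L=\min(2,1/\beta)$, decaying like $r^{-1-\min(2,1/\beta)}$; passing to the cone distance $s$ ($r\sim s^{2}$) this is rate $-\min(4,2/\beta)$, namely $-4$ for $0<\beta\le\tfrac12$ and $-2/\beta$ for $\tfrac12<\beta<1$. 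Both competing modes genuinely appear because $p$ lies off the edge and at the symmetric mid-angle.

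For (3) I would apply the Chern--Gauss--Bonnet theorem in the form valid for K\"ahler cone metrics with a conical end: $E(g_{RF})=\tfrac1{8\pi^{2}}\int_{\mathbf C^{2}}|\mathrm{Rm}|^{2}=\chi(\mathbf C^{2})-(\text{defect along }C)-(\text{defect at the end})$. The defect along $C$ vanishes, since $C\cong\mathbf C^{\ast}$ has $\chi(C)=0$ and trivial normal bundle in $\mathbf C^{2}$; the defect at the end equals $\beta^{2}$, the conical analogue of the ALE term $1/|\Gamma|$ (consistent with $E=0$ for flat $\mathbf C^{2}$ at $\beta=1$), so $E(g_{RF})=1-\beta^{2}$; finiteness comes for free from (1) and (2). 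I expect (3) to be the main obstacle: the non-compact singular setting requires justifying this version of Chern--Gauss--Bonnet and, concretely, computing the boundary transgression of the asymptotic cone $\mathbf C_{\beta}\times\mathbf C_{\beta}$ to be exactly $\beta^{2}$, which needs enough control of the subleading curvature at infinity for the limit to exist with this value. An alternative would be to substitute the explicit Gibbons--Hawking curvature into $\int|\mathrm{Rm}|^{2}$ and integrate by parts over the wedge, localising it to $p$ and to infinity. (Should the identification $(P,I)\cong\mathbf C^{2}$ and the formula $\zeta=(1-zw)^{\beta}$ not be available beforehand, establishing them — via the monodromy around $p$ and around the $F$-glued edge of the holomorphic $1$-form of the $\mathbf R^{3}$-direction, together with surjectivity onto $\mathbf C^{2}$ — would be the other substantial step.)
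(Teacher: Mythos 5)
Your skeleton matches the paper's: Gibbons--Hawking over the glued wedge, the identification \(x_1+ix_2=(1-zw)^{\beta}\) via the weight-\(\pm1\) holomorphic functions, the polyhomogeneous expansion of \(\Gamma_p\) near the edge for Item~\ref{Item 1}, and a multipole expansion at infinity (with the dipole killed by the \(s\mapsto -s\) symmetry) for Item~\ref{Item 2} --- your identification of the competing modes at \(\min(2,1/\beta)\) at both ends is exactly the right mechanism, and your volume-form computation via \(I^{*}dx_3=V^{-1}\eta\) and \(w\partial_w x_3-z\partial_z x_3=\tfrac12 V^{-1}\) is a clean alternative to the paper's explicit Jacobian of \((z,w)\) in the frame \(\eta_1,\eta_2\). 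The genuine gap in Items~\ref{Item 1} and~\ref{Item 2} is that you only ever estimate the potential \(V\) and never the connection \(1\)-form, which enters the metric through \(V^{-1}\eta^{2}\) and is \emph{not} controlled by \(V\) alone but by the \(s\)-integral of \(DV\). For Item~\ref{Item 1} this is fatal to your stated derivation: \(V-V|_{\mathrm{edge}}=O(r^{\min(2,1/\beta)})\) with \(\min(2,1/\beta)\geq 1\) is always Lipschitz in the cone coordinate, so tracking \(V\) alone would give \(\alpha=1\) for every \(\beta\); the exponent \(\alpha=1/\beta-1<1\) for \(\beta>1/2\) comes from the coefficients \(a_1+ia_2=\int_0^s(\ldots)DV\) and from \(h_0^{-1}\partial h_0/\partial r\), which behave like \(r^{1/\beta-1}\) and sit in the off-diagonal entries \(a_{1\overline{2}}\). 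Similarly for Item~\ref{Item 2} one must produce a gauge in which \(|\alpha-\alpha_0|=O(|x|^{-1-\min(2,1/\beta)})\), using \(d(\alpha-\alpha_0)=-\star_{\beta}d(f-f_0)\), before evaluating \(g_{RF}\) in a \(g_F\)-orthonormal frame; this is a necessary step, not a formality.

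For Item~\ref{Item 3} you have correctly identified the obstacle but not removed it: the Chern--Gauss--Bonnet identity with a defect \(\beta^{2}\) at a conical end with link \(S^3_{\beta,\beta}\) and zero defect along \(C\) (because \(\chi(\mathbf{C}^{*})=0\)) is precisely the statement that requires proof in this noncompact, conically singular setting, so as written your argument for the energy is an ansatz that happens to give the right number. The paper carries out what you list as the ``alternative'': from \(|\mathrm{Rm}(g_{RF})|^{2}=\tfrac{1}{4f}\Delta_{\beta}\Delta_{\beta}f^{-1}\) and the Riemannian submersion structure one gets \(\|\mathrm{Rm}\|_{L^2}^2=(\pi/2)\int_{\mathbf{R}^3}\Delta_{\beta}\Delta_{\beta}f^{-1}\,dV_{\beta}\), and Stokes reduces this to three boundary terms: the cylinder around the edge contributes nothing (by \(\beta\)-smoothness, \(|D\Delta_{\beta}f^{-1}|=O(r^{2/\beta-3})\) over an area \(O(r)\)), the large sphere contributes \(-16\pi\beta^{2}\) (computed for \(f_0=1/(2\beta|x|)\) after checking that \(f^{-1}-f_0^{-1}\) contributes nothing in the limit), and the small sphere at \(p\) contributes \(-16\pi\), giving \(8\pi^{2}(1-\beta^{2})\). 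This route is elementary and self-contained; if you want a complete proof, that is where your effort should go rather than into justifying the singular Gauss--Bonnet formula.
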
	

We collect some background material on the Green's function of a wedge and the Gibbons-Hawking ansatz in Section \ref{background}.  The proof of Theorem \ref{THEOREM} is done in Section \ref{proof thm section}; the identification of \((P, I)\) with \(\mathbf{C}^2\) is already in \cite{DonaldsonKMCS}, we repeat the argument filling-in small details. The original content of this article rests on the three items \ref{Item 1}, \ref{Item 2} and \ref{Item 3}, which  are proved in \ref{complex structure section}, \ref{asymptotics section} and \ref{energy section} respectively. Finally, in Section \ref{additiona comments}, we discuss the sectional curvature of the metrics \(g_{RF}\) and the limits when \(\beta \to 0 \).

The interest in Theorem \ref{THEOREM} comes from the blow-up analysis of the K\"ahler-Einstein (KE) equations in the context of solutions with cone singularities. 
In the case of smooth KE metrics on complex surfaces the solutions can only degenerate -in the non-collapsed regime- by developing isolated orbifold points, and the blow-up limits at these are the well-known ALE spaces. In the conical case a new feature arises when the curves along which the metrics have singularities degenerate. In this setting, the \(g_{RF}\) furnish a model for blow-up limits at a point where a sequence of smooth curves develops an ordinary double point. Models for blow-up limits of sequences in which the curves develop an ordinary \(d\)-tuple point are constructed in \cite{martin}.

The energy of a Riemannian manifold \((M, g)\) is defined as 
\[ E(g) = \frac{1}{8\pi^2} \int_M | \mbox{Rm}(g)|^2 dV_g , \]
where \(\mbox{Rm}(g)\) denotes the curvature operator of the metric. In our case \(g_{RF}\) is smooth on \(\mathbf{C}^2 \setminus C\) and we integrate on this region. Following next we clarify the meaning of the first two items in Theorem \ref{THEOREM}, but first let us introduce some notation. 

We write \( \mathbf{C}_{\beta}  \) for the complex numbers endowed with the singular metric \( \beta^2 |\xi|^{2\beta -2} |d\xi|^2 \). 
We recognize it as  the standard cone of total angle \(2\pi\beta\). Indeed, if we introduce the `cone coordinates'
\begin{equation}
\xi = r^{1/\beta} e^{i\theta} 
\end{equation}
then \(\beta^2 |\xi|^{2\beta -2} |d\xi|^2 = dr^2 + \beta^2 r^2 d\theta^2 .\) There are two flat model metrics which are relevant to us. The first is  \(\mathbf{C}_{\beta} \times\mathbf{C}\), which captures the local behavior of \(g_{RF}\) at points on the conic. In complex coordinates 
\(g_{loc} =  \beta^2 |z_1|^{2\beta -2} |dz_1|^2  + |dz_2|^2 . \)
Secondly is  \(\mathbf{C}_{\beta} \times\mathbf{C}_{\beta}\), which models the asymptotic behavior of \(g_{RF}\) at infinity. In complex coordinates \( g_F = \beta^2 |u|^{2\beta -2} |du|^2 + \beta^2 |v|^{2\beta -2} |dv|^2 .\) We introduce `spherical coordinates' by setting
\[ \rho^2 = |u|^{2\beta} + |v|^{2\beta} . \]
The function \(\rho\) measures the intrinsic distance to \(0\) and it is easy to check to that \(g_F = d\rho^2 + \rho^2 \overline{g} \) where
\(\overline{g}\) is a metric on the 3-sphere with cone angle \(2\pi\beta\) along the Hopf circles determined by the complex lines \(\{u=0\}\) and \(\{v=0\}\). We proceed  with the explanation of Theorem \ref{THEOREM}

\begin{itemize}
	
	\item  
	
	Item \ref{Item 1}. Let \(p \in C \) and  \( (z_1, z_2) \) be  complex coordinates centered at \(p\) such that \(C=\{z_1=0\}\). Let \(z_1=r_1^{1/\beta}e^{i\theta_1}\). Set \(\epsilon_1 = dr_1 + i\beta r_1 d\theta_1 \) and \(\epsilon_2=dw\). 
	The K\"ahler from associated to \(g_{RF}\) writes as
	\[ \omega_{RF} = i\sum_{j,k} a_{j\overline{k}} \epsilon_j \overline{\epsilon_k} \]
	for  smooth functions \(a_{j\overline{k}}\) on the complement of \(\{z_1 =0\}\).
	We say that $g_{RF}$ is \(C^{\alpha}\)
	if, for every $p \in C$ and holomorphic coordinates centered at \(p\) as above, the  $a_{j\overline{k}}$ extend to $\{z_1=0\}$ as \(C^{\alpha}\) functions in the cone coordinates \( (r_1 e^{i \theta_1}, z_2 ) \). We also require the matrix $(a_{j\overline{k}} (p))$ to be positive definite  and that  $a_{1\overline{2}} =0$ when \(z_1=0\). In particular these conditions imply that \( A^{-1}g_{loc} \leq g_{RF} \leq A g_{loc} \) for some \( A >0 \).
	
	\item Item \ref{Item 2}.
	We say that \(g_{RF}\) is asymptotic to \(g_F\) at rate \(-\mu \), for some \( \mu>0\), if there is a closed ball \(B \subset \mathbf{C}^2 \) and a map \(\Phi : \mathbf{C}^2 \setminus B \to \mathbf{C}^2 \) which is a diffeomorphism onto its image and with the property that
	\[ | \Phi^{*}g_{RF} - g_F |_{g_F} \leq A \rho^{-\mu}, \hspace{3mm} | \Phi^{*}I - I |_{g_F} \leq A \rho^{-\mu} \] 
	for some constant \(A>0\); where \(I\) denotes the standard complex structure of \(\mathbf{C}^2 \). We write \( (z, w)= \Phi (u,v) \), so that necessarily \( \Psi (\{uv=0\}) \subset \{zw=1\} \).

\end{itemize}
 
\subsection*{Acknowledgments} I learned abou the Gibbons-Hawking anstaz from my supervisor, Simon Donaldson, back in 2011 during the first year of my PhD. I would like to thank Simon for his beautiful teaching and the European Research Council Grant 247331 for financial support.
This article was written during my visit to the 2017 Summer Program at IMPA, Rio de Janeiro. I would also like to thank IMPA for providing me with excellent working conditions.
	
\section{Background} \label{background}

\subsection{Potential theory on a wedge} \label{green function section}

Consider the wedge

\[ W = \{(re^{i\tilde{\theta}}, s) \in \mathbf{R}^3 \hspace{3mm} \mbox{s.t.} \hspace{2mm} -\pi \beta < \tilde{\theta} < \pi \beta   \} . \]
Write \(S=\{0\} \times \mathbf{R}\) for the edge of \(W\) and let \(p=(1,0,0)\). It is a fact that there is a unique continuous and positive function \( \tilde{\Gamma}_p : \overline{W} \setminus \{p\} \to \mathbf{R}_{>0} \) which tends to \(0\) at infinity and solves the boundary value problem
$$\left\{\begin{array}{cc}
\displaystyle \Delta \tilde{\Gamma}_p= \delta_p & \mbox{ on } W, \\
\\
\displaystyle \frac{\partial \tilde{\Gamma}_p}{\partial\nu}=0 & \mbox{ on }  \partial W . \\
\end{array}\right.$$
The first equation is meant to be interpreted in the sense of distributions, \(\Delta\) is the standard Euclidean Laplacian and  \(\delta_p \) is the Dirac delta at \(p\). In the second equation \(\nu\) denotes the outward unit normal vector, which is well defined on the complement of the edge. In other words,  \(\tilde{\Gamma}_p\) is the Green's function for the Laplace operator with pole at \(p\) associated to the Neumann boundary value problem over \(W\). 

Standard elliptic regularity theory -Weyl's lemma- implies that \(\tilde{\Gamma}_p\) is smooth on \( W \setminus \{p\} \). Indeed, on \(W\) we can write

\begin{equation} \label{representation green}
\tilde{\Gamma}_p (x) = \frac{1}{4\pi|x-p|} + F	
\end{equation}
for some smooth harmonic function \(F\). The behavior of \(\tilde{\Gamma}_p\) around points on the edge is more subtle.

We use a rotation to identify the faces of \(W\). Write \(\tilde{\theta}=\beta \theta\). We are led with
a metric on \(\mathbf{R}^3\) with cone angle \(2\pi\beta\) along \(S=\{0\}\times \mathbf{R}\),
\begin{equation}
g_{\beta} = dr^2 + \beta^2 r^2 d\theta^2 + ds^2 .
\end{equation}
Write \(\Delta_{\beta}\) for its Laplacian.
We let \(\Gamma_p(re^{i\theta}, s)= \tilde{\Gamma}_p (re^{i\tilde{\theta}}, s)\). The function \(\Gamma_p\) is continuous on \(\mathbf{R}^3 \setminus\{p\}\), smooth on the complement of \(S \cup\{p\}\) and solves the distributional equation
\[ \Delta_{\beta} \Gamma_p = \delta_p . \]
It is shown in \cite{DonaldsonKMCS} that \(\Gamma_p\) is \(\beta\)-smooth at  points of \(S\); which means that it is a smooth function of the variables \(r^{1/\beta} e^{i\theta}, r^2, s \). Moreover, we have a  `polyhomogeneous' expansion

\begin{equation} \label{expansion}
\Gamma_p = \sum_{j, k \geq 0} a_{j, k} (s) r^{(k/\beta) + 2j} \cos (k\theta) 
\end{equation}
with $a_{j, k}$ smooth functions of \(s\) and which converges uniformly when \(r \leq 1/4\).

Allowing the point \(p\) to vary we obtain, in the usaul way, the function \( G(p, q)= \Gamma_p (q) \) which provides an inverse for \(\Delta_{\beta} \psi = \phi\) by letting \(\psi(x)= \int G(x, y) \phi (y) dV_{\beta}(y) \). The symmetries and dilations of \(g_{\beta}\) are reflected in that
\[ G(T_l p, T_l q) = G(p, q), \hspace{2mm} G(R_{\gamma}p, R_{\gamma}q)= G(p, q), \hspace{2mm}  G( m_{\lambda} p, m_{\lambda} q) = \lambda^{-1} G(p, q) \] 
where \( T_l(r, \theta, s) = (r, \theta, s+l) \), \(R_{\gamma} (r, \theta, s) = (r, \theta + \gamma, s) \) and  \( m_{\lambda} (r, \theta, s) = (\lambda r, \theta, \lambda s) \) for \(\lambda>0\).
There is also the symmetric property \(G(p, q)= G(q, p)\). 
It follows from the  \(\beta\)-smoothness that there is \(\kappa>0\) such that
\begin{enumerate}
	\item 
	\(|G(0, p)| \leq \kappa \) for every \(p\) with \(|p|=1\)
	\item
	\(|G(x_1, p)-G(x_2, p)| \leq \kappa |x_1 - x_2|^{1/\beta} \) whenever \(|p|=1\) and \(|x_1|, |x_2| \leq 1/2\)
\end{enumerate}	
It is easy to write the Green's function with the pole located at \(S\),
\[ G (0, x)=\frac{1}{4 \pi \beta |x|} . \]
By homogeneity, if \(|x| \geq 2 |p| \)
\begin{equation}
	|G(x, p)- G(x, 0)| = |x|^{-1} |G(|p||x|^{-1}, x|x|^{-1}) - G(0, |x|^{-1}x)| \leq \kappa |x|^{-1-1/\beta} .
\end{equation}
In particular we see that \(\Gamma_p\) decays as \(|x|^{-1}\). 

We include an observation regarding formula \ref{representation green} which will be useful for us later on

\begin{lemma} \label{positive lemma}
	\(F>0\)
\end{lemma}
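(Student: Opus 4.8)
The claim is that in the decomposition $\tilde\Gamma_p(x) = \frac{1}{4\pi|x-p|} + F$ on the wedge $W$, the harmonic remainder $F$ is strictly positive. The natural strategy is a maximum-principle argument applied to $F$ on $\overline{W}\setminus\{p\}$, combined with a careful analysis of the boundary behavior of $F$ on the three pieces of $\partial(\overline W\setminus\{p\})$: the two flat faces of the wedge, the edge $S$, the small sphere around $p$, and infinity.

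First I would record that $F = \tilde\Gamma_p - \frac{1}{4\pi|x-p|}$ is harmonic on all of $W$ (the singularities of the two terms at $p$ cancel) and continuous up to $\overline W\setminus\{p\}$ by the stated regularity of $\tilde\Gamma_p$ away from $p$. Next, examine the boundary conditions. On the interior of each face, $\partial\tilde\Gamma_p/\partial\nu = 0$, so $\partial F/\partial\nu = -\partial_\nu\bigl(\tfrac{1}{4\pi|x-p|}\bigr)$; since $p$ lies in the interior of $W$ and is equidistant from the two faces, the outward normal derivative of $\frac{1}{4\pi|x-p|}$ on each face is strictly positive (moving outward through a face increases the distance to $p$), hence $\partial F/\partial\nu < 0$ on the faces. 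This is the key sign: at an interior boundary minimum of $F$ on a face, Hopf's lemma would force $\partial F/\partial\nu<0$ to be compatible only with $F$ being larger in the interior, so a negative minimum cannot be attained on the open faces. Near the edge $S$, one uses the polyhomogeneous expansion \eqref{expansion} (equivalently $\beta$-smoothness of $\Gamma_p$) to see that $F$ extends continuously to $S$ with no singular contribution, so $S$ contributes nothing new to the infimum analysis; and as $|x|\to\infty$ both $\tilde\Gamma_p$ and $\frac{1}{4\pi|x-p|}$ tend to $0$, so $F\to 0$ at infinity.

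Now run the maximum principle. Suppose for contradiction $\inf_{\overline W\setminus\{p\}} F \le 0$; since $F\to 0$ at infinity and $F$ is not identically $0$ (it is real-analytic, and if $F\equiv 0$ then $\tilde\Gamma_p$ would equal the full-space Green's function, which does not satisfy the Neumann condition), the infimum would have to be a strictly negative value attained at some finite point. By the strong maximum principle for the harmonic function $F$ it cannot be attained in the open set $W$. By the face analysis above together with Hopf's lemma it cannot be attained on the open faces. On the edge $S$ one can either invoke a removable-singularity/reflection argument using $\beta$-smoothness, or simply note that a point of $S$ is a limit of interior points and faces and handle it by a barrier. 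Finally, near $p$ we have $F$ bounded while $\frac{1}{4\pi|x-p|}\to+\infty$, so $F$ is certainly not going to $-\infty$ there; a punctured neighborhood of $p$ contributes no new infimum. This exhausts all possibilities, giving a contradiction, so $F>0$ everywhere.

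The main obstacle is the behavior at the edge $S$, where the domain is genuinely non-smooth and the standard Hopf lemma does not apply directly. I expect the cleanest fix is to pass to the cone metric picture: $F$ pulls back to a function that is $\beta$-smooth (a smooth function of $r^{1/\beta}e^{i\theta}, r^2, s$) and $\Delta_\beta$-harmonic on the complement of $S$, and by the expansion \eqref{expansion}-type structure its leading behavior transverse to $S$ is governed by a nonnegative first coefficient; a comparison with the explicit edge Green's function $G(0,x)=\frac{1}{4\pi\beta|x|}$, which is everywhere positive, localizes the argument near $S$ and rules out a negative infimum there. Alternatively, since $\overline W\setminus\{p\}$ minus a neighborhood of $S$ is a smooth domain on which the above argument is complete, and $F$ is continuous across $S$, one only needs that $F\ge 0$ in the limit approaching $S$, which again follows from the explicit positivity of the edge-pole Green's function together with the decay estimates already stated in the excerpt.
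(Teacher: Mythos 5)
Your overall strategy --- a minimum-principle argument on \(\overline W\setminus\{p\}\) exploiting the Neumann condition on the faces, with separate treatment of the edge, the pole and infinity --- is the same as the paper's, but two of the key steps do not close as written. The first is a sign error on the faces, and it is fatal to the argument in the form you give it. Moving in the outward normal direction \emph{increases} \(|x-p|\) and therefore \emph{decreases} the Newtonian potential, so \(\partial_\nu\bigl(\tfrac{1}{4\pi|x-p|}\bigr)<0\) on each face (explicitly, \(\partial_\nu(1/|x-p|)=-\langle x-p,\nu\rangle/|x-p|^3\) with \(\langle x-p,\nu\rangle=-\langle p,\nu\rangle>0\)), whence \(\partial F/\partial\nu>0\), not \(<0\) as you assert. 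This matters because your contradiction is supposed to come from Hopf's lemma: at a minimum attained at an interior point of a face, Hopf gives \(\partial F/\partial\nu<0\), which is perfectly \emph{consistent} with the sign you computed, so no contradiction is reached and a negative minimum on a face is not excluded. With the corrected sign \(\partial F/\partial\nu>0\) the step does work (and one does not even need Hopf: at a boundary minimum the outward normal derivative is \(\le 0\)).

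The second gap is the edge, which you rightly flag as the main obstacle but do not resolve. Continuity of \(F\) up to \(S\) does not prevent the infimum from being attained there, and ``positivity of the edge-pole Green's function \(G(0,x)\)'' says nothing by itself about the value of \(F\) on \(S\). The missing ingredient is an explicit evaluation: by the symmetry \(G(p,q)=G(q,p)\) and translation invariance along the edge, for \(x\in S\) one has \(\tilde\Gamma_p(x)=G(x,p)=\tfrac{1}{4\pi\beta|x-p|}\), so that \(F|_S=(\beta^{-1}-1)/(4\pi|x-p|)>0\) since \(\beta<1\). This is exactly how the paper handles the edge, and the same asymptotic comparison shows \(F\sim(\beta^{-1}-1)/(4\pi|x|)>0\) on large spheres, which is a little cleaner than arguing through \(F\to 0\) together with \(F\not\equiv 0\) and also yields strict positivity up to the boundary rather than merely \(\inf F\ge 0\). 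With these two repairs your proof coincides with the paper's.
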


\begin{proof}
	Since \(F\) is harmonic it is enough to show that it is positive on \(\partial(B \cap W )\) for any sufficiently large ball \(B\).  Since \( \tilde{\Gamma}_p \) is asymptotic to \(1/4\pi\beta|x|\) it follows that \(F>0\) on \(\partial B \cap W \). Note that \(F\) restricted to the edge is equal to \( (\beta^{-1}-1)/ 4\pi |x-p| >0 \). The fact that the normal derivative of \(\tilde{\Gamma}_p\) is zero at the boundary of \(W\) implies that \(F\) has no critical points when restricted to these planes, it then follows that \(F>0\) on \(\partial W \cap B\).
\end{proof}

To finish this section and for the sake of completeness we comment a bit more on the expansion \ref{expansion}. The coefficients $a_{j, k}$ are given in terms of Bessel's functions and we want to indicate how these arise. The technique is separation of variables. We write
\begin{equation} \label{separation var}
	G(r, \theta, s; r', \theta', s') = \sum_{k=0}^{\infty} G_k(r, r', R) \cos k(\theta-\theta')
\end{equation}
with \(R=|s-s'|\). We decompose
\begin{equation*}
\triangle_{\beta} = \frac{\partial^2}{\partial r^2} + \frac{1}{r}\frac{\partial}{\partial r} + \frac{1}{\beta^2 r^2} \frac{\partial^2}{\partial \theta^2} + \frac{\partial^2}{\partial s^2} = L + \frac{\partial^2}{\partial s^2} .
\end{equation*}
The point is that -for any integer \(k \geq 0 \) and \(\lambda \geq 0 \)- the function \(\phi= J_{\nu}(\lambda r)e^{ik\theta} \) is an eigenfunction for \(L\) with \(L\phi = - \lambda^2 \phi \); where \(\nu= k/\beta \) and
\begin{equation} \label{Bessel funct}
	J_{\nu}= \sum_{j=0}^{\infty} (-1)^j \frac{(z/2)^{\nu + 2j}}{j! (\nu+j)!}
\end{equation}
is Bessel's function, which solves \(f^{''} + z^{-1}f' + (1-\nu^2 z^{-2})f =0 \). This leads to a formula for the heat kernel associated to \(\Delta_{\beta}\) 
\[ (4\pi t)^{-1/2} e^{-R^2/4t} \sum_{k=0}^{\infty} \left( \pi^{-1} \int_{0}^{\infty} e^{-\lambda^2 t} J_{\nu}(\lambda r) J_{\nu}(\lambda r') d\lambda \right) \cos k(\theta - \theta') . \]
The Green's function is obtained by integration of the heat kernel with respect to the time parameter and this gives us
\begin{equation} \label{formal expresion}
	G_k (r, r', R)= \int_{0}^{\infty} \int_{0}^{\infty} (4\pi t)^{-1/2} e^{-\lambda^2 t -R^2/4t} J_{\nu}(\lambda r) J_{\nu}(\lambda r') d\lambda dt .
\end{equation}
We fix \((r', \theta', s')=(1, 0, 0)\), replace \ref{formal expresion} into \ref{separation var}, expand the Bessel's functions into power series \ref{Bessel funct} and exchange the integral with the summation; to obtain a formal polyhomogeneous expansion as  in \ref{expansion}. The validity of the expression is guaranteed provided we check uniform convergence. In order to do this the integral \ref{formal expresion} has  to be properly manipulated, suitable bounds must be derived and some expertise with Bessel's functions is required -see \cite{DonaldsonKMCS}-.

\subsection{The Gibbons-Hawking ansatz} \label{GH section}
This well-known construction provides a `local' correspondence between positive harmonic functions on domains in \( \mathbf{R}^3 \) and hyperk\"ahler structures with \(S^1\) symmetry. More precisely, let \(x_1, x_2, x_3\) be standard coordinates on \(\mathbf{R}^3\) and let \(f\) be a positive harmonic function on \(\Omega\). Consider an \(S^1\)-bundle over \(\Omega\) equipped with a connection \footnote{By a connection we mean an \(S^1\)-invariant \(1\)-form on the total space which gives \(1\) when contracted with the derivative of the \(S^1\)-action. Its curvature is \(d\alpha \) and it is a general fact that it is the pull-back by the bundle projection of a closed \(2\)-form on the base whose de Rham cohomology class represents \(-2\pi c_1\). We shall often suppress the pull-back by the bundle projection in our formulas.} \(\alpha\) which satisfies the Bogomolony equation 
\begin{equation} \label{Bogomolony}
	d \alpha = - \star df . 
\end{equation}
The hyperk\"ahler structure is then defined by means of the three 2-forms
\begin{equation}
	\omega_i = \alpha dx_i + f dx_j dx_k , 
\end{equation}
here and in the rest of the article we use the notation of the indices \((i, j, k)\) varying over the cyclic permutations of \((1, 2, 3)\). The Bogomolony equation \ref{Bogomolony} is indeed equivalent to the \(\omega_i\) being closed. The bundle projection is then characterized as th hyperk\"ahler moment map for the \(S^1\)-action.   

\begin{example} \label{euclidean metric}
	A basic case is that of the Euclidean metric on \(\mathbf{R}^4 \cong \mathbf{C}^2\) equipped with the \(S^1\)-action \(e^{i\theta}(z_1, z_2)=(e^{i\theta}z_1, e^{-i\theta}z_2)\) which preserves its standard hyperk\"ahler structure. The hyperk\"ahler moment map agrees with the Hopf map
	\begin{equation} \label{Hopf map}
	H(z_1, z_2)= \left( z_1 z_2, \frac{|z_1|^2-|z_2|^2}{2} \right) .
	\end{equation}
	Removing \(0\) gives an \(S^1\)-bundle with first Chern class equal to \(-1\); and it is straightforward to check that
	\begin{equation}
	f= \frac{1}{2|x|}, \hspace{3mm} \alpha = \mbox{Re} \left(  i \frac{\overline{z_2}dz_2 - \overline{z_1}dz_1}{|z_1|^2 + |z_2|^2}\right) .
	\end{equation}
\end{example}

A unit vector \(v\) in \(\mathbf{R}^3\) determines a parallel complex structure on the hyperk\"ahler manifold, by sending the horizontal lift of the constant vector field \(v\) to the derivative of the \(S^1\)-action. We now review an explicit construction of holomorphic functions for these complex structures on a particular case which will be relevant for us, our reference is Section 5.3 in \cite{DonaldsonKMCS}.

Assume that \(\Omega\) is the product of a \emph{simply connected} domain \(U\subset \mathbf{R}^2\) with the \(x_3\) axis. We will consider the complex structure determined by the \(x_3\) direction. We trivialize the bundle and denote the circle coordinate with \(e^{it}\), so that
\[ \alpha = dt + \sum_{j=1}^3 a_j dx_j . \]
We can change gauge by \( \tilde{t}= t - \int_0^{x_3} a_3(x_1, x_2, q) dq \) -or in other words parallel translate in the \(x_3\) direction- and assume that \(a_3 \equiv 0\). The Bogomolony equation \ref{Bogomolony} amounts to 
\begin{equation} \label{bogomolony local}
	\frac{\partial a_2}{\partial x_3} = f_1, \hspace{3mm} \frac{\partial a_1}{\partial x_3}= -f_2, \hspace{3mm} \frac{\partial a_1}{\partial x_2} - \frac{\partial a_2}{\partial x_1} = f_3 ;
\end{equation}
where \(f_i\) means \(\partial f / \partial x_i \). We further assume that \( f_3 =0 \) on \( \{x_3 =0\} \), in other words this is to say that \(\alpha\) restricts to a flat connection over \(U\). Since \(U\) is simply connected we can perform a gauge transformation \( \tilde{t} = t - \phi (x_1, x_2) \) and assume that \( a_1 = a_2 = 0 \) on the slice $\{x_3=0\}$. 

The horizontal lifts of the coordinate vectors \(\partial/\partial x_i\) are given by

$$ \tilde{\frac{\partial}{\partial x_1}}=  \frac{\partial}{\partial x_1} - a_1  \frac{\partial}{\partial t}, \hspace{3mm} \tilde{\frac{\partial}{\partial x_2}}=  \frac{\partial}{\partial x_2} - a_2  \frac{\partial}{\partial t} , \hspace{3mm} \tilde{\frac{\partial}{\partial x_3}}= \frac{\partial}{\partial x_3} ; $$
and the complex structure is defined as
\begin{equation}
I  \tilde{\frac{\partial}{\partial x_1}} =   \tilde{\frac{\partial}{\partial x_2}}, \hspace{3mm} I \frac{\partial}{\partial x_3} = -f  \frac{\partial}{\partial t}
\end{equation}
The Cauchy-Riemann equations for a function $h$ to be holomorphic w.r.t. $I$ are then given by

\begin{equation} \label{Cauchy Riemann}
\frac{\partial h}{\partial x_1} + i  \frac{\partial h}{\partial x_2} = (a_1 + i  a_2) \frac{\partial h}{\partial t}, \hspace{4mm} \frac{\partial h}{\partial x_3}= if \frac{\partial h}{\partial t}
\end{equation}
We look for a function $h$ which has weight one for the circle action, so that \( {\partial h}/{\partial t} = ih \) . We use separation of variables and write $h = \tilde{h} e^{it}$ with $\tilde{h} = \tilde{h} (x_1, x_2, x_3)$. The second equation in \ref{Cauchy Riemann} gives us $\partial \tilde{h}/ \partial x_3 = -f \tilde{h}$; so that

\begin{equation} \label{hol1}
h= h_0 e^{-u} e^{it}, \hspace{3mm} u = \int_0^{x_3} f(x_1, x_2, q) dq, \hspace{3mm} h_0 = h_0(x_1, x_2)
\end{equation}
Recall that $a_1 = a_2 = 0$ on the slice $\lbrace x_3=0 \rbrace$. Let $h_0$ be any solution of the equation

\begin{equation} \label{hol2}
\frac{\partial h_0}{\partial x_1} + i \frac{\partial h_0}{\partial x_2} = 0 ,
\end{equation}
in other words an holomorphic function of $x_1 + i x_2$.

\begin{lemma} \label{hol function lemma}
	The function $h$ defined by \ref{hol1} and \ref{hol2} solves \ref{Cauchy Riemann}. 
\end{lemma}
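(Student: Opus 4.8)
The plan is to verify the two equations in \ref{Cauchy Riemann} directly, substituting the explicit form $h = h_0 e^{-u} e^{it}$ from \ref{hol1} and \ref{hol2}. Since $h$ has weight one for the circle action, we have $\partial h/\partial t = i h$ everywhere, so the right-hand sides of \ref{Cauchy Riemann} become $i(a_1 + i a_2)\,h$ and $-f\,h$ respectively, and the task reduces to computing the left-hand sides.

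First I would dispose of the second equation, which holds with no hypothesis on $h_0$. By the very definition of $u$ in \ref{hol1} we have $\partial u/\partial x_3 = f$, so differentiating the product gives $\partial h/\partial x_3 = -f\, h_0 e^{-u} e^{it} = -f h$, which is exactly $i f (\partial h/\partial t) = i f (i h)$.

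For the first equation I would differentiate $h = h_0 e^{-u} e^{it}$ in $x_1$ and $x_2$ and combine the two, obtaining
\[ \frac{\partial h}{\partial x_1} + i \frac{\partial h}{\partial x_2} = \left[ \left( \frac{\partial h_0}{\partial x_1} + i \frac{\partial h_0}{\partial x_2} \right) - h_0 \left( \frac{\partial u}{\partial x_1} + i \frac{\partial u}{\partial x_2} \right) \right] e^{-u} e^{it} . \]
The first bracketed term vanishes because $h_0$ is holomorphic in $x_1 + i x_2$ by \ref{hol2}, so it remains to evaluate $\partial u/\partial x_1 + i\,\partial u/\partial x_2$. Differentiating under the integral sign (legitimate since $f$ is smooth on the region of interest) gives $\partial u/\partial x_j = \int_0^{x_3} f_j(x_1, x_2, q)\, dq$ for $j = 1, 2$. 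Now I invoke the first two relations of \ref{bogomolony local}, namely $f_1 = \partial a_2/\partial x_3$ and $f_2 = -\partial a_1/\partial x_3$, together with the gauge normalization $a_1 = a_2 = 0$ on $\{x_3 = 0\}$, to integrate: $\partial u/\partial x_1 = a_2$ and $\partial u/\partial x_2 = -a_1$. Hence $\partial h/\partial x_1 + i\,\partial h/\partial x_2 = -(a_2 - i a_1)\,h = i(a_1 + i a_2)\,h = (a_1 + i a_2)(\partial h/\partial t)$, as required.

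There is no genuine obstacle here; the computation is routine. The only point that deserves emphasis is that it relies essentially on the two preliminary gauge choices recorded above \ref{hol1}: the reduction $a_3 \equiv 0$ (which makes the weight-one ansatz $h = \tilde h(x_1,x_2,x_3)e^{it}$ compatible with the absence of an $a_3$ term in \ref{Cauchy Riemann}), and the vanishing $a_1 = a_2 = 0$ on the central slice (without which the $x_3$-integrals of $f_1$ and $f_2$ would pick up boundary terms, forcing $h_0$ to be multiplied by a suitable primitive $e^{\phi}$ — which is precisely the gauge transformation used to arrange $a_1 = a_2 = 0$ in the first place).
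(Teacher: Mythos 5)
Your proposal is correct and follows essentially the same route as the paper: verify the second Cauchy--Riemann equation directly from $\partial u/\partial x_3 = f$, and reduce the first to the identities $\partial u/\partial x_1 = a_2$, $\partial u/\partial x_2 = -a_1$ obtained by integrating the Bogomolony equations \ref{bogomolony local} in $x_3$ using the gauge normalization $a_1 = a_2 = 0$ on $\{x_3 = 0\}$. You merely spell out the integration step that the paper asserts in one line, so there is nothing to add.
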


\begin{proof}
The Cauchy-Riemann equations \ref{Cauchy Riemann} is an over-determined system. It follows from the definition of $h$ that it solves the second equation in \ref{Cauchy Riemann} and that it solves the first equation in \ref{Cauchy Riemann} only in the slice \(\{x_1 =0\} \). The point is that \(h\) is a solution thanks to the integrability condition provided by the Bogomolony equation \ref{Bogomolony}, as the following computation shows
$$\frac{\partial h}{\partial x_1} + i \frac{\partial h}{\partial x_2} = e^{-u}e^{it} \left( \frac{\partial h_0}{\partial x_1} + i \frac{\partial h_0}{\partial x_2} -  h_0 \frac{\partial u}{\partial x_1} - h_0   i \frac{\partial u}{\partial x_2} \right) = i h \left(- \frac{\partial u}{\partial x_2} + i \frac{\partial u}{\partial x_1} \right)$$ 
and it follows from \ref{bogomolony local} that

 \[ \frac{\partial u}{\partial x_1} = a_2 , \hspace{3mm} \frac{\partial u}{\partial x_2} = -a_1  \]
\end{proof}

Similarly, $h_0 e^u e^{-i\psi}$ defines a holomorphic function with weight $-1$ for the circle action. Given any holomorphic function of the complex variable \(x_1 + i x_3\), \(h_0\) on \(U\), the pair  
\begin{equation}
z= h_0 e^{-u}e^{i\psi}, \hspace{4mm} w=h_0 e^u e^{-i\psi}
\end{equation}
defines an \(S^1\)-equivariant   holomorphic map from \( (\Omega \times S^1, I) \) to a domain in \(\mathbf{C}^2 \) equipped with the circle action \(e^{it}(z, w)= (e^{it}z, e^{-it}w)\).

\begin{example}
	Taub-Nut metric. Let \(c>0\) and consider the harmonic function 
	\[f = 2c + \frac{1}{2|x|} . \]
	Let \(\alpha\) be the connection on the Hopf bundle \( H: \mathbf{R}^4 \setminus \{0\} \to \mathbf{R}^3 \setminus \{0\} \) given in Example \ref{euclidean metric}, so that \( d\alpha = - \star df \). We look at complex structure \(I\) on \(\mathbf{R}^4\) determined by the \(x_3\)-axis. \footnote{On \( \mathbf{R}^4 \) we have standard coordinates \(s_1, s_2, s_3, s_4 \). For notational convenience we write \(z_1=s_1 + is_2\) and \(z_2 = s_3 + is_4\), but \(z_1, z_2\) are \emph{not} necessarily complex coordinates for \( (\mathbf{R}^4, I)\).} 
	
	We want a suitable trivialization of the bundle which fits us in the context of Lemma \ref{hol function lemma}. Note that it follows from \ref{Hopf map} that 
	\[ 2|x| = |z_1|^2 + |z_2|^2, \]
	so \( |z_1| = (|x| - x_3)^{1/2} \) and \(|z_2| =(|x| + x_3)^{1/2} \). Write \( \xi= x_1 + i x_2 = |\xi| e^{i\theta} \) and let \(U\) be the complement of the negative real axis in the \( \xi \)-plane so that \(-\pi < \theta  < \pi\). Over \( \Omega \) we have the following trivialization
	\[ \Phi (x, e^{it})=(z_1, z_2)= \left( (|x|-x_3)^{1/2}e^{i\theta/2}e^{it}, (|x| + x_3)^{1/2}e^{i\theta/2}e^{-it} \right)  \]
	and it is easy to check that
	\[ \alpha = dt - \frac{x_3}{2|x|}d\theta , \]
	which clearly satisfies \(a_3 \equiv 0\) and \(a_1 = a_2 = 0\) when \(x_3=0\). \footnote{The restriction of the Hopf connection to the punctured plane $\{x_3 =0\}\setminus \{(0,0)\}$ is a flat connection with \emph{non-trivial} holonomy, this forces us to introduce a cut in the plane in order to define the desired trivialization with \(a_1=a_2=0\).}
	
	We apply Lemma \ref{hol function lemma} with \(h_0 = \sqrt{\xi}\). We can easily compute
	
	\[ u= \int_0^{x_3} 2c + \frac{1}{2 \sqrt{|\xi|^2 + q^2}} dq = 2cx_3 + \frac{1}{2} \log \left(  x_3 + \sqrt{x_3^2 + |\xi|^2} \right)  -\frac{1}{2} \log |\xi| \]
	and therefore 
	\begin{equation} \label{Taub Nut coordinates}
		 z= e^{c(|z_1|^2 - |z_2|^2)} z_1, \hspace{3mm} w= e^{c(|z_2|^2 -|z_1|^2)}z_2 .
	\end{equation}
	It is then clear that the map \( (z, w) = (z(z_1, z_2), w(z_1, z_2)) \) extends to define a biholomorphism between \( (\mathbf{R}^4, I) \) and \(\mathbf{C}^2\). 
	
	As a final remark we relate the diffeomorphism \ref{Taub Nut coordinates} to LeBrun's expression for the K\"ahler potential of the Taub-Nut metric -see \cite{LeBrunTaubNut}-. Indeed, \( \omega = \alpha dx_3 + f dx_1dx_2 \) and it is easy to check that \( \omega = i \partial \overline{\partial} (|x| + c (x_1^2 + x_2^2 + 2x_3^2))\). Since \(|x|= (1/2)(|z_1|^2 + |z_2|^2) \) and \( x_1^2 + x_2^2 + 2 x_3^2 = (1/2) (|z_1|^4 + |z_2|^4) \) we obtain
	\[ \omega = \frac{i}{2} \partial \overline{\partial} \left(  |z_1|^2 + |z_2|^2 + c(|z_1|^4 + |z_2|^4) \right)  , \]
	with \(|z_1|, |z_2|\) determined implicitly in terms of \(z, w\) by means of  \ref{Taub Nut coordinates}.
\end{example}

\subsection{4-dimensional Riemannian geometry} \label{4d section}
Let \((M, g)\) be an oriented Riemannian four-manifold and let \(\mbox{Rm}(g) : \Lambda^2 \to \Lambda^2 \) be its curvature operator. The decomposition \(\Lambda = \Lambda^{+} \oplus \Lambda^{-} \) of the 2-forms into self and anti-self-dual given by the Hodge star operator of \(g\) determines the well-known decomposition of \( \mbox{Rm}(g) \) into four three by three blocks 
\[
\left( 
\begin{array}{c|c}
\frac{s}{12} + W^{+} & \mathring{r} \\
\hline
\mathring{r} & \frac{s}{12} + W^{-} 
\end{array}
\right) .
\]
These blocks can also be interpreted in terms of the curvature \(F_{\nabla}\) of the Levi-Civita connection on the bundles \( \Lambda^{+}, \Lambda^{-} \). Indeed, if \( \theta_1, \theta_2, \theta_3\) is an orthonormal triple of anti-self-dual forms we can write
\[ F_{\nabla} (\theta_i) = F_j \otimes \theta_k - F_k \otimes \theta_j \]  
for some 2-forms \(F_1, F_2, F_3\). We write the anti-self-dual parts as \(F_{i}^{-} = \sum_{j=1}^{3} c_{ij} \theta_j \). The fact is that \( (c_{ij})_{1 \leq i, j \leq 3} \) agrees with the block \( s/12 + W^{-} \); and similarly for the other blocks.  The curvature 2-forms are  given by \(F_i = T_i - T_j \wedge T_k\) where \(T_1, T_2, T_3 \) are the connection 1-forms \( \nabla (\theta_i) = T_j \otimes \theta_k - T_k \otimes \theta_j \). 

The torsion-free property gives us
\begin{equation} \label{cartan}
	d\theta_i = T_j \wedge \theta_k - T_k \wedge \theta_j .
\end{equation}
A simple algebraic fact -see Proposition 2.3 in \cite{FineGauge}- is that the system of equations \ref{cartan} for \(T_1, T_2, T_3\) has the unique solution 
\begin{equation} \label{formula connection}
	2T_i = \star \psi_i + \star (\star \psi_j \wedge \theta_k) - \star (\star \psi_k \wedge \theta_j) , 
\end{equation}
where \(\psi_i = d\theta_i \) and \(\star\) denotes the Hodge operator of \(g\). This fact can be interpreted as a characterization of the Levi-Civita connection on \(\Lambda^{-}\) as the unique which is metric and torsion-free; somewhat analogous to the Cartan's lemma.

We use the previous discussion to compute the energy distribution \(|\mbox{Rm}(g)|^2 \) of a metric \(g= f dx^2 + f^{-1} \alpha^2\) given by the Gibbons-Hawking anstaz. There is an orthonormal frame of self-dual 2-forms \( \omega_i = \alpha dx_i + f dx_j dx_k \); since \(d \omega_i =0\) the only non-vanishing part of the curvature operator is \(W^{-}\), the anti-self-dual Weyl curvature tensor and it is a general fact that \(W^{-}\) is symmetric and trace-free. We consider the orthonormal frame of \(\Lambda^{-}\) given by \(\theta_i = \alpha dx_i - f dx_j dx_k \); so \(d \theta_i = -2f_i dx_1 dx_2 dx_3 \) and \ref{formula connection} gives us
\[ T_i = \frac{f_i}{f^2}\alpha - \frac{f_j dx_k -f_k dx_j}{f} . \]
We compute the curvature forms and express their anti-self-dual components with respect to the \(\theta_i\) frame to obtain
\[ c_{ij} = \frac{f_{ij}}{f^2} - 3 \frac{f_i f_j}{f^3} + \delta_{ij} \frac{|Df|^2}{f^3} .\]
We can write this more succinctly as
\begin{equation} \label{curv op GH met}
	 W^{-} = (-f/2) \mathring{\mbox{Hess}} (f^{-2}) ,
\end{equation}
where \(\mathring{\mbox{Hess}}\) denotes the standard Euclidean trace-free Hessian.

In order to achieve our goal we proceed by straightforward computation,
\[ |\mbox{Rm}(g)|^2 = \sum_{i,j} c_{ij}^2 = 6 f^{-6} |Df|^4 + f^{-4} \sum_{i,j} f_{ij}^2 - 6 f^{-5} \sum_{i,j} f_{ij}f_if_j . \]
Let \(\Delta\) be the Euclidean Laplacian, so that \(\Delta f =0 \). It follows easily that

\[ \Delta f^{-1} = -2f^{-3} |Df|^2, \hspace{3mm} (1/2) \Delta \Delta f^{-1} = 12 f^{-5} |Df|^4 + 2 f^{-3} \sum_{i,j} f_{ij}^2 - 12 f^{-4} \sum_{i,j} f_{ij}f_if_j . \]
Comparing we obtain the formula -see Remark 2.4 in \cite{GrossWilson}-
\begin{equation} \label{energy distribution}
	|\mbox{Rm}(g)|^2 = \frac{1}{4 f} \Delta \Delta f^{-1} .
\end{equation} 

\section{Proof of Theorem} \label{proof thm section}

We go back to the construction of the metric \(g_{RF}\) mentioned in the Introduction. It is convenient to perform the gluing right in the beginning. Equivalently, we start with

\begin{equation}
g_{\beta} = dr^2 + \beta^2 r^2 d\theta^2 + ds^2.
\end{equation} 
Write \(\star_{\beta}\) for its Hodge operator, which acts one \(1\)-forms as
\begin{equation*}
\star_{\beta} dr = \beta  r d\theta ds, \hspace{3mm} \star_{\beta} \beta r d\theta =  ds dr, \hspace{3mm} \star_{\beta} ds = \beta  r drd\theta .
\end{equation*} 

Let $p=(1,0, 0)$ and let $\Gamma_p$ be the Green's function for \(\Delta_{\beta}\) with pole at \(p\) -see Subsection \ref{green function section}-. We take \(f=2\pi\Gamma_p \), so that \( -\star_{\beta} df \) integrates \(2\pi\) over spheres centered at \(p\). Note that 
\begin{equation*}
	 d \star_{\beta} df = (\triangle_{\beta} f) V_{\beta} =0
\end{equation*}
where \(dV_{\beta} = \beta r dr d\theta ds \) denotes the volume form.
Let \( \Pi : P_0 \to \mathbf{R}^3 \setminus (S \cup \{p\}) \) be the \(S^1\)-bundle with \( c_1 (P_0)=-1 \). We shall show that there is a connection  \(\alpha\) on \(P_0\) with curvature \(-\star_{\beta} df \) and with trivial holonomy along small loops that shrink to \(S\); note that these two conditions determine \(\alpha\) uniquely up to gauge equivalence. 

We consider the metric
\begin{equation}
g_{RF}= f g_{\beta} + f^{-1} \alpha^2 .
\end{equation}
It is clear that \(g_{RF}\) is locally hyperk\"ahler. On the other hand we can extend \(P_0\) to \( P = P_0 \sqcup(\mathbf{R} \times S^1) \sqcup \{\tilde{p} \}  \) so that the \(S^1\)-action extends smoothly to \(P\), acting freely on \( P \setminus \{ \tilde{p} \} \) and fixing \(\tilde{p} \). The map \(\Pi\) also extends smoothly as the orbit projection \(\Pi : P \to \mathbf{R}^3 \) with \( \Pi (\tilde{p}) = p \). We shall see that \(g_{RF}\) extends smoothly over \(\tilde{p}\) and as a metric with cone singularities along \( \Pi^{-1} (S) \cong \mathbf{R} \times S^1 \).

\subsection{Complex structure} \label{complex structure section}

Let \(\mathbf{R}_{\ast}^2\) be the \(re^{i\theta}\)-plane with the point \((1,0)\) removed. Define the 1-form on  \(\mathbf{R}_{\ast}^2 \times \mathbf{R} \) 

\begin{equation}
\alpha_0 = a_1 dr + a_2 \beta r d\theta
\end{equation}
where

\begin{equation} \label{connection}
a_1 = -\frac{1}{\beta r} \int_0^s \frac {\partial f}{\partial \theta} (r, \theta, q) dq, \hspace{4mm} a_2 =  \int_0^s \frac{\partial f}{\partial r} (r, \theta, q) dq .
\end{equation} 
It is clear that $\alpha_0$ is smooth on $(\mathbf{R}_{\ast}^2 \times \mathbb{R})\setminus S$; and the functions $a_1, a_2$ extend continuously by $0$ over $S$ as $C^{\alpha}$ functions for $\alpha = \beta^{-1} -1$. The computations that follow are done over the complement of $S$.

\begin{claim}
	
	\begin{equation}
	d\alpha_0 = - \star_{\beta} df
	\end{equation}
	
\end{claim}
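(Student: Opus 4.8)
The plan is a direct computation in the coordinates $(r,\theta,s)$: expand both sides in the basis $dr\wedge d\theta$, $ds\wedge dr$, $d\theta\wedge ds$ of $2$-forms and match the three pairs of coefficients. First I would expand $df = f_r\,dr + f_\theta\,d\theta + f_s\,ds$ and apply the stated action of $\star_{\beta}$ on the orthonormal coframe $\{dr,\,\beta r\,d\theta,\,ds\}$ to get
\[ -\star_{\beta} df = -\beta r\,f_r\, d\theta\wedge ds \;-\; \tfrac{1}{\beta r}f_\theta\, ds\wedge dr \;-\; \beta r\,f_s\, dr\wedge d\theta. \]
Then I would compute $d\alpha_0 = d(a_1\,dr) + d(a_2\,\beta r\,d\theta)$; the $dr\wedge dr$ and $d\theta\wedge d\theta$ terms drop out, leaving
\[ d\alpha_0 = \bigl(\partial_r(a_2\beta r) - \partial_\theta a_1\bigr)\, dr\wedge d\theta \;+\; (\partial_s a_1)\, ds\wedge dr \;-\; \partial_s(a_2\beta r)\, d\theta\wedge ds. \]

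Two of the three comparisons are immediate from the fundamental theorem of calculus applied to the definitions in \ref{connection}: one has $\partial_s a_1 = -\tfrac{1}{\beta r}f_\theta$ and $\partial_s(a_2\beta r) = \beta r\,f_r$, which reproduce the $ds\wedge dr$ and $d\theta\wedge ds$ components of $-\star_{\beta} df$. For the remaining component, differentiating under the integral sign gives
\[ \partial_r(a_2\beta r) - \partial_\theta a_1 \;=\; \beta r \int_0^s \Bigl( f_{rr} + \tfrac{1}{r} f_r + \tfrac{1}{\beta^2 r^2} f_{\theta\theta} \Bigr)(r,\theta,q)\, dq, \]
and here I would use the harmonicity $\Delta_{\beta} f = 0$ to replace the integrand by $-f_{ss}$, so that the integral telescopes to $-\beta r\bigl(f_s(r,\theta,s) - f_s(r,\theta,0)\bigr)$.

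The one ingredient that is not purely mechanical — and which I expect to be the crux — is the vanishing of the boundary term $f_s(r,\theta,0)$; this is the analogue, in the present situation, of the hypothesis ``$f_3 = 0$ on $\{x_3=0\}$'' imposed in the general Gibbons--Hawking discussion of Section \ref{GH section}, and it must be verified here rather than assumed. It holds because the wedge $W$ and the pole $p = (1,0,0)$ are both invariant under the reflection $s \mapsto -s$, which commutes with the Euclidean Laplacian and preserves the Neumann boundary condition; hence the reflected function solves the same boundary value problem and, by uniqueness, $\tilde{\Gamma}_p$ — and therefore $f = 2\pi\Gamma_p$ — is even in $s$ (equivalently, $1/(4\pi|x-p|)$ is even in $s$ and so is its smooth harmonic correction $F$ in \ref{representation green}). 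Consequently $f_s$ vanishes on the slice $\{s=0\}$ away from $S$ and from $p$, which is precisely the region over which the Claim is being verified and along which the integrals defining $a_1,a_2$ stay clear of $S$ and $p$. With $f_s(r,\theta,0) = 0$ we obtain $\partial_r(a_2\beta r) - \partial_\theta a_1 = -\beta r\,f_s$, matching the last component of $-\star_{\beta} df$, and the Claim follows.
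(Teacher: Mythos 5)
Your proof is correct and follows essentially the same route as the paper: componentwise comparison in the $(r,\theta,s)$ coframe, with two components immediate from the fundamental theorem of calculus and the third obtained by substituting $\Delta_{\beta}f=0$ into the integral and using $\partial f/\partial s=0$ on $\{s=0\}$. The only difference is that you spell out the reflection-symmetry argument behind that last vanishing, which the paper simply asserts ``by symmetry.''
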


\begin{proof}
	
	We have that
	
	$$ d \alpha_0 = -\frac{\partial a_2}{\partial s} \beta r d\theta ds + \frac{\partial a_1}{\partial s} ds dr + \left( \frac{\partial a_2}{\partial r} + \frac{1}{r}a_2 - \frac{1}{\beta r}\frac{\partial a_1}{\partial \theta} \right) \beta r dr d\theta$$
	and
	
	$$ \star_{\beta} df  = \frac{\partial f}{\partial r} \beta r d\theta ds + \frac{1}{\beta r} \frac{\partial f}{\partial \theta} ds dr + \frac{\partial f}{\partial s}  \beta r dr d\theta$$
	It is clear from \ref{connection} that ${\partial a_2}/{\partial s} = {\partial f}/{\partial r}$ and  ${\partial a_1} /{\partial s} =- ({1}/{\beta r}) {\partial f}/{\partial \theta}$. 
	
	It follows by symmetry that ${\partial f}/{\partial s} =0$ when $s=0$, so that $ {\partial f}/{\partial s} = \int_0^s \frac{\partial^2 f}{\partial s^2}(., t) dt$. Using that $\triangle_{\beta} f =0$ we get
	
	$$ \frac{\partial a_2}{\partial r} + \frac{1}{r}a_2 - \frac{1}{\beta r}\frac{\partial a_1}{\partial \theta} = \int_{0}^{s} \left(  \frac{\partial^2 f}{\partial r^2} + \frac{1}{r} \frac{\partial f}{\partial r} + \frac{1}{\beta^2 r^2} \frac{\partial^2 f}{\partial \theta^2} \right) (r, \theta, q) dq = - \int_0^s \frac{\partial^2 f}{\partial s^2}(r, \theta, q) dq = -   \frac{\partial f}{\partial s} $$
	
\end{proof}

Consider the product $\mathbf{R}_{\ast}^2 \times \mathbf{R} \times S^1$ and write points in the circle factor as $e^{it}$. Define the connection 1-form form $\alpha = dt + \alpha_0$ and the metric

\begin{equation}
g_{RF} = f g_{\beta} + f^{-1} \alpha^2
\end{equation}
The horizontal lifts of $\partial/\partial r, \partial/\partial \theta, \partial/\partial s$ are 

$$ \tilde{\frac{\partial}{\partial r}}=  \frac{\partial}{\partial r} - a_1  \frac{\partial}{\partial t}, \hspace{3mm} \tilde{\frac{\partial}{\partial \theta}}=  \frac{\partial}{\partial \theta} - a_2 \beta r  \frac{\partial}{\partial t} , \hspace{3mm} \tilde{\frac{\partial}{\partial s}}= \frac{\partial}{\partial s}$$
We consider the complex structure determined by the \(s\)-axis

\begin{equation}
I  \tilde{\frac{\partial}{\partial r}} = \frac{1}{\beta r}  \tilde{\frac{\partial}{\partial \theta}}, \hspace{3mm} I \frac{\partial}{\partial s} = -f  \frac{\partial}{\partial t} .
\end{equation}
The associated 2-form is
\begin{equation}
\omega_{RF} = g_{RF} (I . , .) =  \alpha  ds + f \beta r dr d\theta
\end{equation}

\begin{claim}
	$(g_{RF}, \omega_{RF}, I)$ defines a K\"ahler structure on $\mathbf{R}_{\ast}^2 \times \mathbf{R} \times S^1$  
\end{claim}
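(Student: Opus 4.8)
The plan is to establish the three conditions defining a Kähler structure — that $I^2=-\mathrm{id}$ with $I$ integrable, that $g_{RF}$ is Hermitian with respect to $I$, and that $d\omega_{RF}=0$ — all computations being carried out on the complement of $S$, where $g_\beta$ is flat.

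I would begin with the purely algebraic points. Write $X_r,X_\theta$ for the horizontal lifts of $\partial/\partial r,\partial/\partial\theta$. Completing the two displayed equations for $I$ by $I^2=-\mathrm{id}$ forces $I X_\theta=-\beta r\,X_r$ and $I(\partial/\partial t)=f^{-1}\partial/\partial s$. Since $\alpha(X_r)=\alpha(X_\theta)=0$ and $\alpha(\partial/\partial t)=1$, the coframe dual to $\{X_r,X_\theta,\partial/\partial s,\partial/\partial t\}$ is $\{dr,d\theta,ds,\alpha\}$; as $f=2\pi\Gamma_p>0$, the forms $\sqrt f\,dr$, $\sqrt f\,\beta r\,d\theta$, $\sqrt f\,ds$, $f^{-1/2}\alpha$ therefore constitute a $g_{RF}$-orthonormal coframe. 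In this frame $I$ has constant coefficients and is orthogonal, so $g_{RF}$ is $I$-Hermitian, $\omega_{RF}:=g_{RF}(I\cdot\,,\cdot)$ is a genuine $2$-form, and expanding it in the orthonormal coframe reproduces exactly $\omega_{RF}=\alpha\,ds+f\beta r\,dr\,d\theta$.

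For closedness I would write $d\omega_{RF}=d\alpha\wedge ds+d(f\beta r\,dr\,d\theta)$ and use the preceding Claim $d\alpha=d\alpha_0=-\star_\beta df$ together with the expression for $\star_\beta df$ obtained in its proof: wedging with $ds$ annihilates all but the $\beta r\,dr\,d\theta$-term, giving $d\alpha\wedge ds=-(\partial f/\partial s)\,\beta r\,dr\,d\theta\,ds$, while $d(f\beta r\,dr\,d\theta)=df\wedge(\beta r\,dr\,d\theta)=(\partial f/\partial s)\,\beta r\,dr\,d\theta\,ds$; the two cancel.

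The main obstacle is the integrability of $I$, since a closed Hermitian form does not by itself make a metric Kähler. The structural way to settle it is the observation already recorded before this subsection: on the complement of $S$ the cone metric $g_\beta$ is locally isometric to Euclidean $\mathbf{R}^3$, so $g_{RF}$ is locally one of the hyperkähler metrics produced by the Gibbons–Hawking ansatz, and $I$ is precisely the parallel — hence integrable — complex structure attached to the unit direction $\partial/\partial s$, as reviewed in Section \ref{GH section}. To make this explicit I would instead exhibit local holomorphic coordinates: on a small simply connected $U\subset\mathbf{R}^2_{\ast}$ disjoint from $\{r=0\}$, introduce flat coordinates $x_1=r\cos\beta\theta$, $x_2=r\sin\beta\theta$, $x_3=s$; then $f$ is positive harmonic in $(x_1,x_2,x_3)$, $\alpha$ has vanishing $dx_3$-component and vanishes on $\{x_3=0\}$, and the Bogomolny equation holds, so Lemma \ref{hol function lemma} applies and, for a choice of $h_0$ that is nowhere zero with $h_0'\neq0$ (for instance $h_0=\exp(x_1+ix_2)$), yields $z=h_0e^{-u}e^{it}$, $w=h_0e^{u}e^{-it}$ with $u=\int_0^{x_3}f\,dq$. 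The $dx_3$- and $dt$-components of $dz$ and $dw$ involve $f\neq0$ and already force $dz\wedge dw\neq0$, so $(z,w)$ is a local $I$-holomorphic chart; this simultaneously records the holomorphic functions used later to identify $(P,I)$ with $\mathbf{C}^2$. The one thing to watch here is that these patches are honest charts — i.e.\ that the angle $\beta\theta$ is covered injectively on $U$ — which is arranged by shrinking $U$.
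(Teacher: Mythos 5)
Your argument is correct, and on the one genuinely non-formal point it takes a different (though closely related) route from the paper. The closedness computation is identical: both proofs reduce $d\omega_{RF}$ to the cancellation of the two $\partial f/\partial s$ terms coming from $d\alpha=-\star_\beta df$ and from $d(f\beta r\,dr\,d\theta)$. The algebraic preliminaries you supply (completing $I$ to an almost complex structure, the orthonormal coframe, Hermitian-ness, and the identity $g_{RF}(I\cdot,\cdot)=\alpha\,ds+f\beta r\,dr\,d\theta$) are taken for granted in the paper, so that part is a welcome expansion rather than a divergence. For integrability the paper proposes checking directly that the bracket of the two displayed $(1,0)$ vector fields vanishes, and then remarks that this is superfluous because complex coordinates are produced immediately afterwards, globally, with $h_0=(1-r^ce^{i\theta})^{1/2}$. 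Your argument is a localized, self-contained version of that remark: flatten $g_\beta$ on small simply connected patches away from $S$ and invoke Lemma \ref{hol function lemma} with a nowhere-vanishing $h_0$. What this buys you is independence from the global construction — in particular you never meet the loci $r=0$ or $h_0=0$ where the global Jacobian $-fcr^{c-1}e^{i\theta}$ needs separate attention — at the cost of checking that the hypotheses of Lemma \ref{hol function lemma} ($a_3\equiv 0$, $a_1=a_2=0$ on $\{s=0\}$, the Bogomolny equation) survive the passage to flattened coordinates, which you do. One small imprecision: the fact that the $\eta_2$-components of $dz$ and $dw$ are $-zf$ and $wf$ does not by itself force $dz\wedge dw\neq 0$, since the two forms could still be proportional; the determinant in the $\{\eta_1,\eta_2\}$ basis equals $2fh_0\,\partial h_0/\partial r$ (times the chain-rule factor), so it is precisely your supplementary hypotheses $h_0\neq 0$ and $h_0'\neq 0$ that make it nonzero. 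With that wording tightened, the proof is complete.
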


\begin{proof}
	The equation $d\alpha = - \star_{\beta} df$ implies that $d\alpha ds = - \frac{\partial f}{\partial s} \beta r drd\theta ds$ and this gives $d\omega = 0$. To prove that $I$ is integrable one can check that
	$$ \left[   \tilde{\frac{\partial}{\partial r}} + i \frac{1}{\beta r}  \tilde{\frac{\partial}{\partial \theta}},  \frac{\partial}{\partial s}  -i f  \frac{\partial}{\partial t} \right] =0 $$
	Strictly speaking we don't need to do this since we are going to find complex coordinates in what follows.
	
\end{proof}

The Cauchy-Riemann equations for a function $h$ to be holomorphic w.r.t. $I$ are given by

\begin{equation} \label{CR}
\frac{\partial h}{\partial r} + i \frac{1}{\beta r}  \frac{\partial h}{\partial \theta} = (a_1 + i  a_2) \frac{\partial h}{\partial t}, \hspace{4mm} \frac{\partial h}{\partial s}= if \frac{\partial h}{\partial t} .
\end{equation}
We look for a function $h$ which has weight one for the circle action,  this goes as in Lemma \ref{hol function lemma}. Let $h_0$ be any solution of the equation

\begin{equation} \label{h2}
\frac{\partial h_0}{\partial r} + i \frac{1}{\beta r}  \frac{\partial h_0}{\partial \theta} = 0 ,
\end{equation}
that is \(h_0\) is a holomorphic function of the variable $r^{1/\beta} e^{i\theta}$. Set

\begin{equation} \label{h1}
h= h_0 e^{-u} e^{it}, \hspace{3mm} u = \int_0^s f(r, \theta, q) dq
\end{equation}

\begin{claim}
	The function $h$ defined by \ref{h1} and \ref{h2} solves \ref{CR}. Similarly, holomorphic functions with weight $-1$ for the circle action are given by $h_0 e^u e^{-it}$.
\end{claim}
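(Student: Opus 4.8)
The plan is to repeat, in this rotationally symmetric setting, the verification carried out in the proof of Lemma \ref{hol function lemma}. By construction $h = h_0 e^{-u}e^{it}$ has weight one for the circle action, so $\partial h / \partial t = ih$. Since $u = \int_0^s f(r,\theta,q)\,dq$ we have $\partial u/\partial s = f$, hence $\partial h/\partial s = -fh = if\,(ih) = if\,\partial h/\partial t$; this is the second equation in \ref{CR} and it holds for any choice of $h_0$.

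For the first equation in \ref{CR} I would differentiate $h = h_0 e^{-u}e^{it}$ in $r$ and $\theta$ and collect terms to get
\[ \frac{\partial h}{\partial r} + \frac{i}{\beta r}\frac{\partial h}{\partial \theta} = e^{-u}e^{it}\left( \frac{\partial h_0}{\partial r} + \frac{i}{\beta r}\frac{\partial h_0}{\partial \theta} \right) - h\left( \frac{\partial u}{\partial r} + \frac{i}{\beta r}\frac{\partial u}{\partial \theta} \right) . \]
The first parenthesis vanishes by \ref{h2} since $h_0$ is holomorphic in $r^{1/\beta}e^{i\theta}$. It remains to check $\partial u/\partial r = a_2$ and $(1/\beta r)\,\partial u/\partial \theta = -a_1$, for then the right-hand side equals $h(-a_2 + i a_1) = (a_1 + i a_2)(ih) = (a_1 + i a_2)\,\partial h/\partial t$, which is exactly the first equation in \ref{CR}. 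These two identities are immediate: differentiating $u = \int_0^s f(r,\theta,q)\,dq$ under the integral sign yields $\partial u/\partial r = \int_0^s (\partial f/\partial r)(r,\theta,q)\,dq$ and $\partial u/\partial \theta = \int_0^s (\partial f/\partial \theta)(r,\theta,q)\,dq$, which are precisely $a_2$ and $-\beta r\,a_1$ by the definition \ref{connection}.

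For the weight $-1$ statement I would run the same computation with $\overline{h} = h_0 e^{u}e^{-it}$, using $\partial \overline{h}/\partial t = -i\overline{h}$, $\partial \overline{h}/\partial s = f\overline{h} = if(-i\overline{h})$, and
\[ \frac{\partial \overline{h}}{\partial r} + \frac{i}{\beta r}\frac{\partial \overline{h}}{\partial \theta} = \overline{h}\left( \frac{\partial u}{\partial r} + \frac{i}{\beta r}\frac{\partial u}{\partial \theta} \right) = \overline{h}(a_2 - i a_1) = (a_1 + i a_2)(-i\overline{h}) = (a_1 + i a_2)\frac{\partial \overline{h}}{\partial t} , \]
where again the $h_0$ term drops out by \ref{h2}.

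There is essentially no obstacle here: in contrast with the general Gibbons--Hawking situation of Lemma \ref{hol function lemma}, where the relations $\partial u/\partial x_1 = a_2$ and $\partial u/\partial x_2 = -a_1$ had to be extracted from the Bogomolony equations \ref{bogomolony local}, here the connection $\alpha_0$ was defined in \ref{connection} precisely by the integrals that make these identities tautological. The only point needing a word of care is that the computation takes place on the complement of $S$, where $f$, $u$ and $\alpha_0$ are smooth and differentiation under the integral sign is legitimate; the behaviour of $h$ across $S$ and over $\tilde{p}$ is a separate issue, addressed once the explicit complex coordinates $(z,w)$ are written down.
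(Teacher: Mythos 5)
Your verification is correct and follows essentially the same route as the paper: expand the $\overline{\partial}$-operator applied to $h_0e^{-u}e^{it}$, kill the $h_0$ term by \ref{h2}, and observe that $\partial u/\partial r = a_2$ and $(1/\beta r)\,\partial u/\partial\theta = -a_1$ hold by the very definition \ref{connection} of the connection form (the paper's proof is exactly this one-line computation, with the $s$-equation and the weight $-1$ case left implicit as in Lemma \ref{hol function lemma}).
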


\begin{proof}
	
	$$\frac{\partial h}{\partial r} + i \frac{1}{\beta r}  \frac{\partial h}{\partial \theta} = e^{-u}e^{it} \left( \frac{\partial h_0}{\partial r} + i \frac{1}{\beta r}  \frac{\partial h_0}{\partial \theta} -  h_0 \frac{\partial u}{\partial r} - h_0   \frac{i}{\beta r}  \frac{\partial u}{\partial \theta} \right) = i h \left(- \frac{1}{\beta r} \frac{\partial u}{\partial \theta} + i \frac{\partial u}{\partial r} \right)$$ 
	and $ - \frac{1}{\beta r} \frac{\partial u}{\partial \theta} + i \frac{\partial u}{\partial r} = a_1 + i a_2$.
\end{proof}

Consider the segment in the \(re^{i\theta}\)-plane given by the points in the real line which are $\geq 1$, let $U$ be the  complement of that segment and \(U^{\ast}= U \setminus \{0\} \).
Write $c=\beta^{-1}$. The function $1-r^c e^{i\theta}$ maps $U$ to the complement of the negative real axis, so  
\begin{equation}\label{h0}
h_0 = (1-r^c e^{i\theta})^{1/2}
\end{equation}
is a well defined function un $U$ which satisfies \ref{h2}. From now on we set $h_0$ to be given by \ref{h0} and define
\begin{equation}
z= h_0 e^{-u}e^{i\psi}, \hspace{4mm} w=h_0 e^u e^{-i\psi}
\end{equation}

Let \(V \subset \mathbf{C}^2 \) be the open set of points \((z, w)\) such that \(zw\notin \mathbf{R}_{\leq 0} \). Write \(C=\{zw =1\}\).
\begin{claim}
	The map \(H =(z, w)\) gives a biholomorphism between \( (U^{*} \times \mathbf{R} \times S^1, I) \) and \(W \setminus C \). Moreover \(H\) extends as an homeomorphism between  \( (U \times \mathbf{R} \times S^1, I) \) with \(H (\{0\} \times \mathbf{R} \times S^1 ) = C \).
\end{claim}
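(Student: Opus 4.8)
The strategy is to analyze the map $H=(z,w)$ directly from its explicit formula $z = h_0 e^{-u}e^{i\psi}$, $w = h_0 e^u e^{-i\psi}$, where I read $e^{i\psi}$ as the circle coordinate $e^{it}$. First I would establish that $H$ takes values in $V$ and that $H$ is holomorphic: holomorphicity of $z$ and $w$ follows from the preceding claim (the functions $h_0 e^{-u}e^{it}$ and $h_0 e^{u}e^{-it}$ solve the Cauchy-Riemann equations \ref{CR}), and since $zw = h_0^2 = 1 - r^c e^{i\theta}$ and $r^c e^{i\theta}$ ranges over the complement of $\mathbf{R}_{\geq 1}$ as $(re^{i\theta})$ ranges over $U$, the product $zw$ avoids $\mathbf{R}_{\leq 0}$; hence the image lies in $V$ and misses $C=\{zw=1\}$ exactly on $U^{*}\times\mathbf{R}\times S^1$ (where $r>0$, so $h_0^2 \neq 1$).

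\textbf{Constructing the inverse.} The key observation is that the three functions $zw$, $z\bar z / (w \bar w)$ — or more symmetrically $|z|^2/|w|^2$ — and the argument data recover $(r,\theta,s,t)$. Concretely, from $zw = 1 - r^c e^{i\theta}$ I recover $r^c e^{i\theta}$, hence $r$ and $\theta \in (-\pi/c, \pi/c)$... wait, rather $r^c e^{i\theta}$ lies in the cut plane so $r = |1-zw|^{1/c}$ and $\theta = \arg(1-zw)$ determine $(r,\theta)$ uniquely. Next, $|z|/|w| = e^{-u}/e^{u} = e^{-2u}$ where $u = u(r,\theta,s) = \int_0^s f(r,\theta,q)\,dq$; since $f>0$, the function $s \mapsto u(r,\theta,s)$ is strictly increasing, with $u(r,\theta,0)=0$, so $u$ is determined by $|z|/|w|$ and then $s$ is recovered as the unique solution of the monotone equation $u(r,\theta,s) = -\tfrac12\log(|z|/|w|)$. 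Finally, with $r,\theta,s$ fixed, $h_0$ and $u$ are determined, so $e^{it} = z/(h_0 e^{-u})$ recovers $t$. This shows $H$ is injective on $U^{*}\times\mathbf{R}\times S^1$; surjectivity onto $V\setminus C$ amounts to checking that for every $(z,w)\in V$ with $zw\neq 1$ the above recovery procedure produces valid $(r,\theta,s,t)$ with $r>0$ — the only point needing care is that $u$ ranges over all of $\mathbf{R}$ as $s$ does, which requires $\int_0^{\pm\infty} f\,dq = \pm\infty$; this holds because $f = 2\pi\Gamma_p$ decays like $|x|^{-1}$ along the $s$-axis (as noted after the Green's function estimates), and $\int^\infty dq/q$ diverges.

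\textbf{Biholomorphism and the extension.} Having shown $H$ is a holomorphic bijection from $(U^{*}\times\mathbf{R}\times S^1, I)$ onto $V\setminus C$, it is automatically a biholomorphism (a holomorphic bijection between complex manifolds of the same dimension has holomorphic inverse). For the homeomorphic extension over $\{0\}\times\mathbf{R}\times S^1$: as $r\to 0$, one has $h_0 \to 1$ and $u \to u(0,\theta,s) = \int_0^s f(0,0,q)\,dq$ (using continuity of $f$ up to $S$, where it is independent of $\theta$), so $(z,w) \to (e^{-u_0}e^{it}, e^{u_0}e^{-it})$ with $u_0 = u(0,0,s)$; these limiting values satisfy $zw=1$, i.e.\ land on $C$, and the map $(s,t)\mapsto (e^{-u_0(s)}e^{it}, e^{u_0(s)}e^{-it})$ parametrizes $C$ bijectively (again by monotonicity of $s\mapsto u_0(s)$ and surjectivity onto $\mathbf{R}$). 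Continuity of the extension and of its inverse at $r=0$ follows from uniform continuity of $h_0$, $u$ in $r$ near $0$ together with the same monotonicity argument controlling the $s$-coordinate.

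\textbf{Main obstacle.} The routine parts are the algebraic inversion and holomorphicity. The genuine subtlety is the global behavior in the $s$-direction: establishing that $s\mapsto u(r,\theta,s)$ is a bijection $\mathbf{R}\to\mathbf{R}$ (hence that $H$ is onto $V\setminus C$ and that the map is proper enough for the extension to be a homeomorphism rather than merely a continuous injection) rests on the asymptotics $f\sim (2\beta)^{-1}|x|^{-1}$ established via the Green's function estimates in Section \ref{green function section}, and one must also confirm that the recovered $r = |1-zw|^{1/c}$ is strictly positive precisely away from $C$ and that the boundary $\{r=0\}$ maps homeomorphically — uniformly in $s$ — onto the noncompact curve $C$. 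I would spell out the monotonicity-plus-divergence argument carefully and otherwise treat the coordinate manipulations as direct computation.
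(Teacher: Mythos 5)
Your construction of the inverse is the same as the paper's: recover $(r,\theta)$ from $r^{c}e^{i\theta}=1-zw$, recover $s$ from $u(r,\theta,s)=\tfrac12\log(|w|/|z|)$ using that $u'=f>0$ and $u(\pm\infty)=\pm\infty$, then recover $e^{it}$; your justification of the divergence of $u$ via the decay $f\sim(2\beta|x|)^{-1}$ is in fact more explicit than the paper, which simply asserts it. The one place you genuinely diverge is the biholomorphism step. The paper computes the Jacobian: with $\eta_1=dr+i\beta r\,d\theta$ and $\eta_2=ds-if^{-1}\alpha$ spanning the $(1,0)$-forms, the determinant of the matrix expressing $(dz,dw)$ in terms of $(\eta_1,\eta_2)$ is $-fcr^{c-1}e^{i\theta}\neq 0$ on $U^{*}\times\mathbf{R}\times S^1$. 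This computation does double duty: it exhibits $(z,w)$ as local holomorphic coordinates, which is precisely how the paper establishes integrability of $I$ (the preceding claim explicitly says the bracket computation is skipped ``since we are going to find complex coordinates''), and it gives smooth local invertibility. You instead invoke the theorem that an injective holomorphic map between equidimensional complex manifolds is a biholomorphism onto its image. That theorem is correct, but it presupposes that $(U^{*}\times\mathbf{R}\times S^1, I)$ is a complex manifold, i.e.\ that the almost complex structure $I$ is integrable --- which at this stage of your argument has not been established (a smooth bijection solving an almost-complex Cauchy--Riemann system need not have invertible differential without integrability). So you should either verify integrability first (the bracket computation the paper mentions) or, more in the spirit of the source, compute the Jacobian directly, which settles both issues at once. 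With that one step supplied, your argument is complete; the remaining parts (surjectivity onto $V\setminus C$, the continuity of the extension over $r=0$ via uniform monotonicity of $s\mapsto u$) are handled correctly and in somewhat more detail than the paper's terse treatment.
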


\begin{proof}
	First we provide an inverse for \(H\), showing the homeomorphism part. The pair \((r, \theta)\) is determined by \( r^c e^{i\theta}= 1- zw \). The function \(u(s)=\int_{0}^{s} f(r, \theta, q) dq \) is increasing \(u' =f >0\) and \(\lim_{s \to \pm \infty} u(s)= \pm \infty\); so \((s, e^{it})\) is given by \(e^{-u}e^{it}=h_0^{-1}z\).
	
	Next we compute the Jacobian of the map \(H\). Let \(\eta_1 = dr + i \beta r d\theta \) and \( \eta_2 = ds - i f^{-1}\alpha \), so that \(\{\eta_1, \eta_2\}\) is a basis of the \((1, 0)\)-forms on \( U^{*} \times \mathbf{R} \times S^1 \). It is straightforward to check that
	\[ dz = z \left( h_0^{-1} \frac{\partial h_0}{\partial r} - a_2 - i a_1 \right) \eta_1 - zf \eta_2 \] 
	\[ dw = w \left( h_0^{-1} \frac{\partial h_0}{\partial r} + a_2 + i a_1 \right) \eta_1 + wf \eta_2 . \] 
	The determinant of the linear map that takes \(\{\eta_1, \eta_2\}\) to \(\{dz, dw\}\) is \(-fcr^{c-1}e^{i\theta}\) and is non-zero on \( U^{*} \times \mathbf{R} \times S^1 \). 	
\end{proof}

We can compose the inverse of \(H\) with the projection of the trivial \(S^1\)-bundle \( \mbox{pr} (re^{i\theta}, s, e^{it}) = (re^{i\theta}, s) \) to obtain the map \(\Pi = \mbox{pr} \circ H^{-1} : V \to \mathbf{R}^3 \). We want to show that \(\Pi\) extends to all of \(\mathbf{C}^2\), as an orbit map for the \(S^1\)-action \(e^{it}(z, w)= (e^{it}z, e^{-it}w)\). Clearly the \(r, \theta\) coordinates of \(\Pi\) extend, since \(r^c e^{i\theta} = 1-zw\). The key step is to extend the function \(s\).

\begin{claim}
	\(s \) extends to \(\mathbf{C}^2\), smoothly on the complement of \(C\). The map \(\Pi : \mathbf{C}^2 \to \mathbf{R}^3 \) is an orbit projection for the \(S^1\)-action \(e^{it}(z, w)= (e^{it}z, e^{-it}w)\) with \(\Pi(0)=p\) and \(\Pi(C)= \{0\}\times\mathbf{R}\).
\end{claim}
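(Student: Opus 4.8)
The plan is to extend the map $\Pi$ across $\mathbf{C}^2\setminus V$, patch by patch. Since $V=\{zw\notin\mathbf{R}_{\le 0}\}$, this complement is the union of the branch-cut locus $\{zw\in\mathbf{R}_{<0}\}$ and the coordinate axes $\{z=0\}\cup\{w=0\}$; and because $r^{1/\beta}e^{i\theta}=1-zw$, the first is carried onto the open half-plane $\{\theta=0,\ r>1\}\times\mathbf{R}$ (there $1-zw>1$, so $\theta=0$, $r>1$) and the second onto the line $L=\{(1,0)\}\times\mathbf{R}$ through $p$ that bounds it (there $1-zw=1$, so $(r,\theta)=(1,0)$). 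So I must cover one half-plane and its edge.

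First I would cover the branch cut by repeating the construction of this section with a rotated cut: take $U_1$ to be the complement in the $re^{i\theta}$-plane of a ray issuing from $(1,0)$ in a direction $\theta\ne 0$, arranged so that $U\cup U_1$ is the plane minus the point $(1,0)$; let $h_0^{(1)}$ be a branch of $(1-r^{1/\beta}e^{i\theta})^{1/2}$ on $U_1$ and form $(z_1,w_1)$ and $H_1$ exactly as before, a biholomorphism of the Gibbons-Hawking space over $(U_1\setminus\{0\})\times\mathbf{R}\times S^1$ onto an open $V_1\subset\mathbf{C}^2$, extending over the fibres above $S$. On each of the two components of $U\cap U_1$ the square roots $h_0$ and $h_0^{(1)}$ agree up to sign, so $(z_1,w_1)$ and $(z,w)$ differ there by the $S^1$-action; hence $\mathrm{pr}\circ H_1^{-1}=\mathrm{pr}\circ H^{-1}$ on the overlap and $\Pi$ extends over $V\cup V_1=\mathbf{C}^2\setminus(\{z=0\}\cup\{w=0\})$.

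The main work is then the extension across the axes. Near $\tilde p$ the metric $g_\beta$ is flat and $f=2\pi\Gamma_p$ equals $\tfrac{1}{2}|x-p|^{-1}$ plus a smooth positive harmonic function, so (as in Example \ref{euclidean metric} and the Taub-Nut example) the Gibbons-Hawking space is a smooth perturbation of flat $\mathbf{C}^2$ near the origin and closes up smoothly over $\tilde p$; the bounded holomorphic functions $z,w$ then extend holomorphically across, one checks they again form a coordinate system, and so $s$ is smooth at $0$ with $\Pi(0)=p$. Near a point of $L\setminus\{p\}$ with $x_3=s_0>0$ the data $(g_\beta,f)$ are smooth, and I would re-run the holomorphic-function construction of Lemma \ref{hol function lemma} in a gauge adapted to the slice $\{x_3=s_0\}$ --- so that the relevant primitive of $f$ is finite there --- with $h_0$ replaced by the holomorphic function $1-r^{1/\beta}e^{i\theta}$, which vanishes simply at $(r,\theta)=(1,0)$. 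This produces a holomorphic coordinate $\zeta$, a submersion near $\Pi^{-1}(1,0,s_0)$ cutting out a smooth complex hypersurface along $\Pi^{-1}$ of a segment of $L$. On the overlap with $V\cup V_1$ the ratio $\zeta/z$ equals a unimodular function times $-h_0\,e^{\int_0^{s_0}f(r,\theta,q)\,dq}$; since $f$ carries a $\tfrac{1}{2}|x-p|^{-1}$ pole at $p$, this last factor has a finite non-zero limit as $(r,\theta)\to(1,0)$, so by Riemann's extension theorem $z/\zeta$ is a holomorphic unit and $z$ extends holomorphically, vanishing simply along $\Pi^{-1}(\{(1,0)\}\times(0,\infty))$; with a transverse coordinate this identifies the neighborhood with one in $\{z=0\}\subset\mathbf{C}^2$. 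The mirror computation on $\{x_3<0\}$ extends $w$ and matches $\{w=0\}$.

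Finally I would assemble: the extended $\Pi:\mathbf{C}^2\to\mathbf{R}^3$ is $S^1$-equivariant, since every $H$ used intertwines the bundle rotation with $e^{it}(z,w)=(e^{it}z,e^{-it}w)$; it is constant on $S^1$-orbits, separates them, and surjects onto $\mathbf{R}^3$, hence is the orbit projection. We have $\Pi(0)=p$ from the previous step and $\Pi(C)=\mathrm{pr}(H(\{0\}\times\mathbf{R}\times S^1))=\{0\}\times\mathbf{R}$ from the homeomorphism extension of the previous claim; and off $C=\Pi^{-1}(S)$ the Gibbons-Hawking metric is smooth and $\Pi$ a smooth submersion (including at $\tilde p=0$, where $g_\beta$ is flat), so $s=x_3\circ\Pi$ is smooth on $\mathbf{C}^2\setminus C$. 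I expect the third paragraph to be the crux: controlling how the coordinates $(z,w)$ degenerate along the axes means balancing the fractional power in $h_0$ against the logarithmic blow-up of $\int_0^s f$ near $L$, which is exactly what the change of gauge and the comparison above are designed to do; the branch-cut gluing is routine bookkeeping.
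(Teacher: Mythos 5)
Your decomposition of \(\mathbf{C}^2\setminus V\), the rotated-cut gluing, and the treatment of the origin are fine, and the route is genuinely different from the paper's; but the crux step --- the local holomorphic model along the fibres over \(L\setminus\{p\}\) --- has a concrete gap as written. The construction of Lemma \ref{hol function lemma} requires the connection to restrict to a \emph{flat} connection on the reference slice, i.e. \(\partial f/\partial x_3=0\) there; this holds on \(\{s=0\}\) by the reflection symmetry of \(f\) about \(p\), but fails on every slice \(\{s=s_0\}\) with \(s_0\neq0\), so no gauge achieves \(a_1=a_2=0\) on that slice and the untwisted equation \(\partial h_0/\partial r+\tfrac{i}{\beta r}\,\partial h_0/\partial\theta=0\) is not the correct compatibility condition: taking \(h_0=1-r^{1/\beta}e^{i\theta}\) with \(u\) normalized at \(s_0\) does \emph{not} produce a holomorphic \(\zeta\). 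The repair is to solve the twisted equation \(\partial h_0/\partial r+\tfrac{i}{\beta r}\,\partial h_0/\partial\theta=i\bigl(a_1+ia_2\bigr)\big|_{s=s_0}\,h_0\), locally solvable as \(h_0=e^{\phi}g\) with \(g\) holomorphic vanishing simply at \((1,0)\); your Riemann-extension comparison of \(\zeta\) with \(z\) then survives because the extra factor \(e^{\phi}\) is a smooth unit. A second, softer gap is surjectivity: your local models only show that the image of the extended map is open and contains a neighbourhood of each circle \(\{z=0,\ |w|=\rho(s_0)\}\); to conclude that \(\Pi\) is defined on all of \(\{z=0\}\) you also need \(\rho(s)\to\infty\) as \(s\to\infty\), which the local argument does not supply.

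The paper sidesteps both issues by working globally: from \(e^{-u}e^{it}=h_0^{-1}z\) one gets \(\int_0^s f=\tfrac12\log(|z|/|w|)\), and splitting \(f=\tfrac12|x-p|^{-1}+\tfrac12F\) with \(F>0\) (Lemma \ref{positive lemma}) and integrating the Newtonian part in closed form turns this into the implicit equation \(2s=\beta|w|^2|\psi|e^{-\int_0^sF}-\beta|z|^2|\psi|e^{\int_0^sF}\). The two logarithmic divergences you identify cancel identically, the resulting equation is manifestly smooth across \(\{zw=0\}\setminus\{0\}\), and positivity of \(F\) makes \(s\mapsto se^{\int_0^sF}\) a diffeomorphism of \([0,\infty)\) --- which is precisely the missing surjectivity statement. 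So your strategy can be completed, but the slice step needs the twisted \(\bar\partial\)-equation and the assembly needs the quantitative growth of \(|w|\) along \(L\); both come for free from the paper's single explicit formula.
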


\begin{proof}
	The coordinate \(s\) is determined by \(e^{-u}e^{it}=h_0^{-1}z\), since \(|h_0|=|z|^{1/2}|w|^{1/2}\) we obtain \(e^{-u} =|z|^{1/2} |w|^{-1/2} \) and taking logarithms
	\begin{equation} \label{function s}
		\int_{0}^{s} f(re^{i\theta}, q)dq = \frac{1}{2} \log \left( \frac{|z|}{|w|} \right) .
	\end{equation}
	It is then clear that in the complement of \(\{zw = 0\} \) the map \(\Pi\) extends with the desired properties. We assume that \(|z w| < \epsilon\) for some small \(\epsilon\). Since \(r^c e^{i\theta} = 1-zw\), we can suppose that \(-\pi<\theta<\pi\). Let \(\tilde{\theta}=\beta \theta \), so that 
	\begin{equation} \label{greens}
		 f(re^{i\theta}, q) = \frac{1}{2 \sqrt{|re^{i\tilde{\theta}}-1|^2 +q^2}} +\frac{F}{2}
	\end{equation}
	for some smooth harmonic \emph{positive} function \(F= F(re^{i\tilde{\theta}}, q)\) -see Lemma \ref{positive lemma}-. Write \( \xi = re^{i\tilde{\theta}} -1 = (1-zw)^{\beta}-1 = \beta zw \psi \) for some \(\psi\) holomorphic function of \(zw\) with \(\psi(0)=1 \). We plug \ref{greens} into \ref{function s} to obtain
	\[ \log \left( \frac{s+\sqrt{s^2 + |\xi|^2}}{|\xi|} \right) + \int_{0}^{s} F = \log \left( \frac{|w|}{|z|} \right) . \]
	We exponentiate and re-arrange terms to obtain
	\begin{equation} \label{global s}
		2s= \beta |w|^2 \psi e^{-\int_{0}^{s}F} - \beta |z|^2 \psi e^{\int_{0}^{s}F} ;
	\end{equation}
	note that in the standard case of \(\beta=1\), we have \(F \equiv 0\), \(\psi \equiv 1\) and therefore \(2s = |w|^2 - |z|^2\) -see \ref{Hopf map}-. The claim follows from \ref{global s}. The map \(\Pi\) sends the \(\{z=0\}\) complex line to the ray \(\{(1, 0, s): s>0\}\) via \(2s e^{\int_{0}^{s}F(1, 0, q)dq} = \beta |w|^2\), the fact that \(F>0\) implies that \(s \to se^{\int_{0}^{s}F}\) is a diffeomorphism of \([0, \infty)\). Similarly, \(\Pi\) sends \(\{w=0\}\) to \(\{(1, 0, s): s<0\}\) via \(2s e^{\int_{s}^{0}F(1, 0, q)dq} =- \beta |z|^2\).
	
\end{proof}

\begin{figure}
	\centering
	\includegraphics[width=0.7\linewidth]{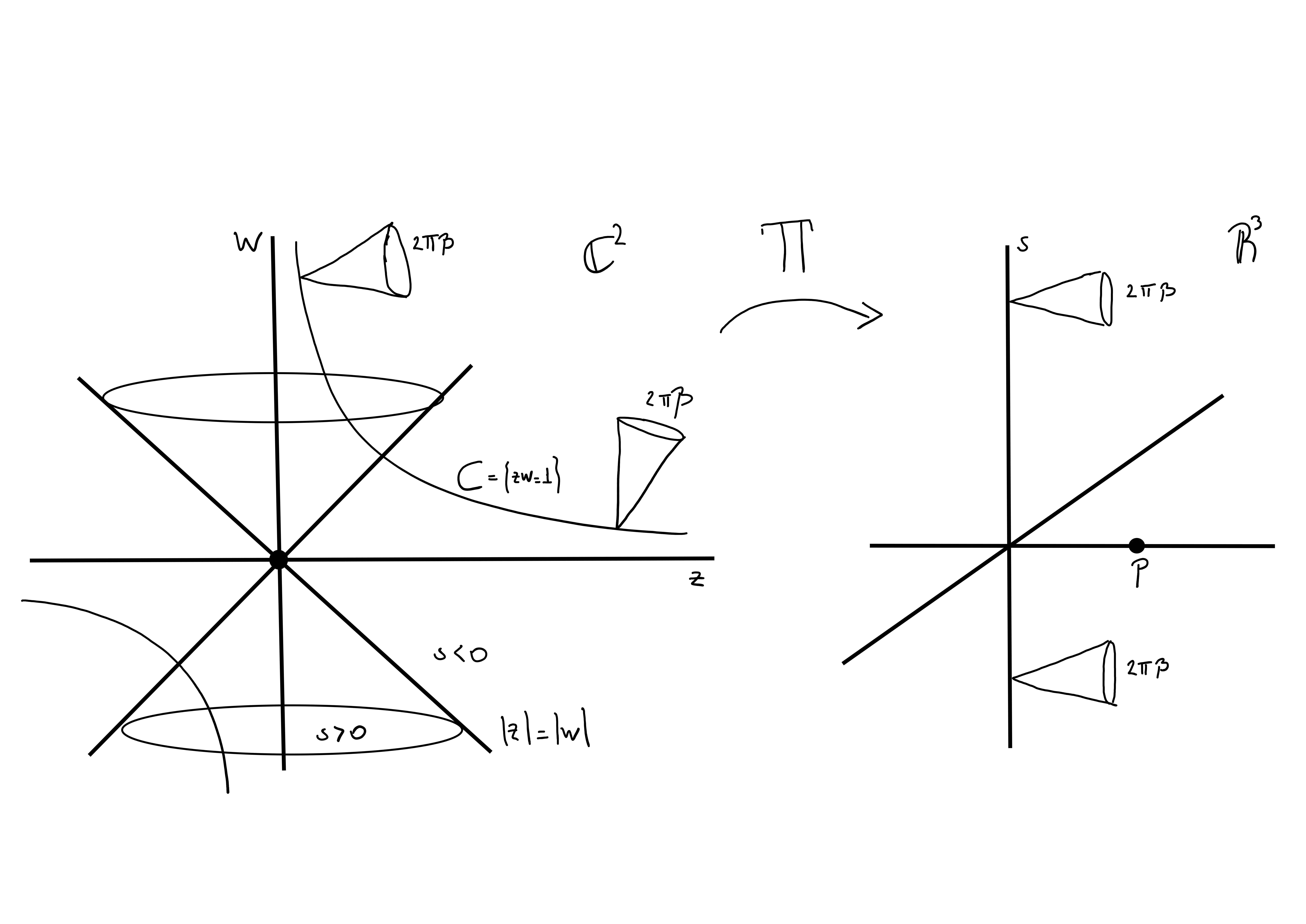}
	\caption{}
	\label{fig:bundleprojection}
\end{figure}

\(I\) is  the standard complex structure in the \(z, w\) coordinates. Since \(I ds = f^{-1}\alpha\), it follows that \(\alpha\) extends as a  connection \(1\)-form on \(\mathbf{C}^2 \setminus \{0\}\) which is smooth on the complement of the conic. It is then standard to show -see for example \cite{AKL}- that \(g_{RF}\) extends smoothly over \(0\), since \(g\) is Euclidean in a neighborhood of \(p\), \(f\) differs from the Newtonian potential by a smooth harmonic function and \(\alpha\) is a smooth connection in a punctured neighborhood of \(0\). The function \(s\) is a moment map for the circle action, in the sense that \(\omega_{RF}(Y, \cdot)= ds \) where \(Y=2 \mbox{Im} \left( w \partial/ \partial w - z \partial / \partial z \right)\). \(s>0\) when \(|w|>|z|\) and \(s<0\) when \(|z|>|w|\). The metric \(g_{RF}\) is invariant under the map \((z, w) \to (w, z) \) which lifts \((re^{i\theta}, s) \to (re^{i\theta}, -s)\). See Figure \ref{fig:bundle-projection-1}
 
The statement about the cone singularities along \(C\) also follows easily. Let \(p \in C\), we take holomorphic coordinates \(z_1 = zw -1\) and \(z_2 = w\), say, so \(C= \{z_1=0\}\). Write \(z_1=r^c e^{i\theta}\) and let \(\epsilon=dr+i\beta rd\theta\), we want to show that \(\omega_{RF}\) has \(C^{\alpha}\) coefficients with repect to the basis \(\{\epsilon \overline{\epsilon}, \epsilon \overline{dz_2}, dz_2 \overline{\epsilon}, dz_2 \overline{dz_2} \}\). Comparing with our previous notation we see that \(\epsilon = \eta_1 \) and 
\[\omega = (if/2) (\epsilon \overline{\epsilon} + \eta_2 \overline{\eta_2} ) , \] 
where \( \eta_2 = \gamma_1 dz_2 - \gamma_2 \epsilon \) with 
\[ \gamma_1 = (w f)^{-1}, \hspace{3mm} \gamma_2 = f^{-1} \left( h_0^{-1} \frac{\partial h_0}{\partial r} + a_2 + i a_1 \right) . \]
The first item in Theorem \ref{THEOREM} then follows from \(h_0^{-1} \partial h_0 / \partial r = (1/zw) c r^{c-1} e^{i\theta} \) and the formula \ref{connection} for \(a_1, a_2\).

To conclude this first part we compute the volume form of \(g_{RF}\). We write \( \Omega = (1/\sqrt{2}) dz dw \).
\begin{claim}
	\begin{equation} \label{volume form}
		\omega_{RF}^2 = \beta^2 |1-zw|^{2\beta-2} \Omega \wedge \overline{\Omega}
	\end{equation}
\end{claim}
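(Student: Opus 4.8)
The plan is to work directly with the basis of $(1,0)$-forms $\{\eta_1,\eta_2\}$ already introduced, where $\eta_1 = dr + i\beta r\,d\theta$ and $\eta_2 = ds - if^{-1}\alpha$, and to compare the top-degree form $\omega_{RF}^2$ with $dz\wedge dw\wedge d\overline z\wedge d\overline w$ through the Jacobian of $H$ computed in the biholomorphism claim. First I would record that $\omega_{RF} = \tfrac{i}{2} f\bigl(\eta_1\wedge\overline{\eta_1} + \eta_2\wedge\overline{\eta_2}\bigr)$; this is just the identity $\omega_{RF} = \alpha\wedge ds + f\beta r\,dr\wedge d\theta$ rewritten, and it is the same expression used in the discussion of the cone singularities along $C$. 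Squaring, using that each of the two summands is decomposable (so its wedge square vanishes) and that $2$-forms commute, gives $\omega_{RF}^2 = -\tfrac12 f^2\,\eta_1\wedge\overline{\eta_1}\wedge\eta_2\wedge\overline{\eta_2}$.

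Next I would invoke the Jacobian identity established when proving that $H$ is a biholomorphism: with $c = \beta^{-1}$ one has $dz\wedge dw = -f c\, r^{c-1} e^{i\theta}\,\eta_1\wedge\eta_2$. Conjugating and wedging, $dz\wedge dw\wedge d\overline z\wedge d\overline w = f^2 c^2 r^{2c-2}\,\eta_1\wedge\eta_2\wedge\overline{\eta_1}\wedge\overline{\eta_2}$; reordering the four one-forms produces a single sign, $\eta_1\wedge\eta_2\wedge\overline{\eta_1}\wedge\overline{\eta_2} = -\,\eta_1\wedge\overline{\eta_1}\wedge\eta_2\wedge\overline{\eta_2}$, so that $\eta_1\wedge\overline{\eta_1}\wedge\eta_2\wedge\overline{\eta_2} = -\tfrac{1}{f^2 c^2 r^{2c-2}}\,dz\wedge dw\wedge d\overline z\wedge d\overline w$. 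Combining with the previous paragraph, the factors of $f$ cancel and $\omega_{RF}^2 = \tfrac{1}{2 c^2 r^{2c-2}}\,dz\wedge dw\wedge d\overline z\wedge d\overline w$.

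Finally I would substitute $c^{-2} = \beta^2$ and use $r^c e^{i\theta} = 1-zw$, so $r^c = |1-zw|$, hence $r = |1-zw|^{\beta}$ and $r^{2c-2} = |1-zw|^{2-2\beta}$; this turns the coefficient into $\tfrac{\beta^2}{2}|1-zw|^{2\beta-2}$, and since $\Omega\wedge\overline\Omega = \tfrac12\,dz\wedge dw\wedge d\overline z\wedge d\overline w$ we obtain $\omega_{RF}^2 = \beta^2 |1-zw|^{2\beta-2}\,\Omega\wedge\overline\Omega$, as claimed. There is no conceptual obstacle here; the only point requiring care is the sign bookkeeping in the reordering of the wedge products together with the correct reading of the already-computed Jacobian determinant.
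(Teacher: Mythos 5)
Your proposal is correct and follows essentially the same route as the paper: it uses the Jacobian identity $dz\wedge dw=-fcr^{c-1}e^{i\theta}\,\eta_1\wedge\eta_2$ from the biholomorphism claim together with the expression of $\omega_{RF}$ in the $\{\eta_1,\eta_2\}$ frame, and the substitution $r=|1-zw|^{\beta}$. The paper's proof is just a terser version of the same computation (it quotes $\Omega\wedge\overline{\Omega}=(1/2)f^2c^2r^{2c-2}\eta_1\eta_2\overline{\eta_1\eta_2}$ and the identities for $\eta_1\overline{\eta_1}$, $\eta_2\overline{\eta_2}$, $\omega^2$ directly), and your sign bookkeeping checks out.
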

\begin{proof}
	It is immediate from the previous computation of the Jacobian of the map \(H=(z, w)\) that
	\[ \Omega \wedge \overline{\Omega} = (1/2)f^2c^2r^{2c-2}\eta_1 \eta_2 \overline{\eta_1 \eta_2}. \]
	We use that 
	\[ \eta_1 \overline{\eta_1} = -2i\beta r dr d\theta, \hspace{2mm} \eta_2 \overline{\eta_2} = 2i f^{-1}ds \alpha, \hspace{2mm} \omega^2=2f\beta r \alpha ds dr d\theta \]
	and \( r= |1 -zw|^{\beta} \) to conclude the claim.
\end{proof}

\subsection{Asymptotics} \label{asymptotics section}

 There is a simple explanation, in terms of the complex curve \(C\), for the exponent \(-2 /\beta\) in Item \ref{Item 2} of Theorem \ref{THEOREM}. Among the diffeomorphisms \(F\) of \( \mathbf{C}^2 \) which, outside a compact set, take the conic \(C=\{zw=1\}\) to its asymptotic lines \(\{zw=0\}\), the ones which are closest to being holomorphic  satisfy \( |\overline{\partial} F (x)| = O (|x|^{-2}) \). On the other hand  \(\rho^2 = |z|^{2\beta}+ |w|^{2\beta}\) and therefore \( |\overline{\partial} F (x)| = O (\rho^{-2/ \beta})\). Our proof of Item \ref{Item 2} is based on the simple observation that applying the Gibbons-Hawking ansatz to \(2 \pi \Gamma_0\) gives rise to  \( \mathbf{C}_{\beta} \times \mathbf{C}_{\beta} \). The asymptotic behavior of \(g_{RF} \) then follows from the fact that \(\Gamma_p \) is asymptotic to \(\Gamma_0\). The `matching map' \(\Phi \) is as a suitable bundle map.

We begin by writing \(g_F = \beta^2 |u|^{2\beta-2} |du|^2 + \beta^2 |v|^{2\beta -2} |dv|^2\) as a Gibbons-Hawking metric. We use the cone coordinates \(u= \rho_1^{1/\beta}e^{i\psi_1}\) and \(v= \rho_2^{1/\beta}e^{i\psi_2}\) so that
\[ g_F = d\rho_1^2 + \beta^2 \rho_1^2 d\psi_1^2 + d\rho_2^2 + \beta^2 \rho_2^2 d\psi_2^2 . \]
Let \(\Pi_0 : \mathbf{C}^2 \to \mathbf{R}^3 \) be defined as
\[ \Pi_0 (u, v) = \left( \beta \rho_1 \rho_2 e^{i (\psi_1 + \psi_2)}, \beta \frac{\rho_2^2 - \rho_1^2}{2} \right) . \]
This is an orbit map for the \(S^1\)-action \(e^{it}(u, v)=(e^{it}u, e^{-it}v)\). If we let \( x= \Pi_0 (u, v) \) then \( |x|^2 = (\beta^2 / 4)(\rho_1^2 +\rho_2^2)^2 \), equivalently
\begin{equation} \label{distance relation}
	\beta \rho^2 = 2|x| .
\end{equation}
The derivative of the action is \(Y = \partial / \partial \psi_1 - \partial / \partial \psi_2 \) and \(|Y|^2_{g_F}= \beta^2 \rho^2 = 1 / (2\beta|x|) \). We let \( \alpha_0 = |Y|_{g_F}^{-2} g_F(Y, \cdot) = \rho^{-2} (\rho_1^2 d\psi_1 - \rho_2^2 d\psi_2) \). It requires a simple computation to check that
\begin{equation}
	g_F = f_0  g + f_0^{-1} \alpha_0^2, \hspace{3mm} \mbox{with} \hspace{1mm} f_0 = \frac{1}{2 \beta |x|} .
\end{equation} 
Note that \(f_0 = 2\pi \Gamma_0 \).

Now let \((\mathbf{C}^2, g_{RF}) \) together with \(\Pi : \mathbf{C}^2 \to \mathbf{R}^3\). We let \( B \subset \mathbf{R}^3 \) a closed ball of radius \(2\), say, so that \(p \in B \). The bundles \(\Pi_0\) and \(\Pi\) are isomorphic on the complement of \(B\), so there is an \(S^1\)-equivariant diffeomorphism \(\Phi : \Pi_0^{-1} (\mathbf{R}^3\setminus B ) \to \Pi^{-1} (\mathbf{R}^3\setminus B )\) which induces the identity on \(\mathbf{R}^3 \setminus B\), in particular note that \(\Phi (\{uv =0\}) \subset \{zw =1\} \). 

\(\Phi_0^{\ast} \alpha \) is a connection on \(\Pi_0\), therefore \( \Phi_0^{\ast} \alpha - \alpha_0 = \eta \) with \(\eta \) a 1-form on \(\mathbf{R}^3 \setminus B \). Moreover
\[ d \eta = -2\pi \star_{\beta} d (\Gamma_p - \Gamma_0) . \]
On the other hand, since \( |\Gamma_p - \Gamma_0| = O(|x|^{-1-1/\beta}) \), we can assume -after changing gauge if necessary- that \(|\alpha - \alpha_0| = O (|x|^{-1-1/\beta}) \). We evaluate \(g_{RF}\) in the orthonormal basis of \(g_F\) given by \(v_1= f_0^{1/2} Y\) and \(v_2, v_3, v_4 \) the horizontal lifts of \( f_0^{-1/2} \partial/ \partial r, f_0^{-1/2} (\beta r)^{-1} \partial/ \partial \theta, f_0^{-1/2} \partial/ \partial s \); our goal is to show that \(g_F (v_i, v_j) = \delta_{ij} + O(|x|^{-1 / \beta}) \) which is the same -due to \ref{distance relation}- as \(|g_{RF}-g_F|_{g_F} = O(\rho^{-2/\beta}) \). Note that \( \alpha (v_1)= f_0^{1/2} \) and \(\alpha (v_j)= f_0^{-1/2} O(|x|^{-1-1/\beta}) \)  for \(j=2, 3, 4\). This follows from straightforward computation, but before doing that we state a simple observation.
\begin{lemma}
	Let \(f= O(|x|^{-a}) \) and \( f-g = O(|x|^{-b}) \) with \(0 < a< b\) and \(f >0\). Then \(g/f = 1 + O(|x|^{-(b-a)})\).
\end{lemma}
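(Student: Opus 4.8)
The argument is short and purely algebraic. The plan is to rewrite the assertion as a statement about a multiplicative error term, and then to bound that error by estimating numerator and denominator separately.

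First, since $f>0$ we may divide by it, and from $g = f-(f-g)$ we get the exact identity
\[ \frac{g}{f} = 1 - \frac{f-g}{f}. \]
So it suffices to show that $(f-g)/f = O(|x|^{-(b-a)})$.

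For this I would bound the two factors separately. The numerator obeys $|f-g|\le C_1|x|^{-b}$ by the second hypothesis. For the denominator we use that $f$ is positive and of order $|x|^{-a}$, so that $f^{-1}=O(|x|^{a})$; that is, there are constants $C_2,R>0$ with $f(x)^{-1}\le C_2|x|^{a}$ for $|x|\ge R$. Multiplying the two estimates gives $|(f-g)/f|\le C_1C_2|x|^{a-b}$ for $|x|\ge R$, which, since $b-a>0$, is precisely an $O(|x|^{-(b-a)})$ bound. Combined with the displayed identity this yields $g/f = 1+O(|x|^{-(b-a)})$.

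There is no real obstacle here; the only point worth flagging is that one genuinely uses the lower bound $f\gtrsim|x|^{-a}$, not merely $f=O(|x|^{-a})$. This costs nothing in the situations where the lemma is invoked, where $f$ is always comparable to an explicit power of $|x|$ — for instance $f=2\pi\Gamma_p$, which decays like $1/(4\pi\beta|x|)$, or $f_0=1/(2\beta|x|)$ and $\alpha(v_1)=f_0^{1/2}$ — so the required estimate $f^{-1}=O(|x|^{a})$ holds on the nose.
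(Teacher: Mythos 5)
Your proof is correct and is essentially the paper's own argument, which consists of exactly the identity $g/f = f/f + (g-f)/f$ followed by the same two bounds. Your remark that one really needs the lower bound $f^{-1}=O(|x|^{a})$ rather than just $f=O(|x|^{-a})$ is a fair (and correct) sharpening of the hypotheses, and as you note it is harmless in the applications, where $f$, $f_0$ are genuinely comparable to $|x|^{-1}$.
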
 
Indeed
\[ g/f = f/f + (g-f)/f . \]
In particular, we conclude that \(f /f_0 = 1 + O (|x|^{-1/\beta}) \). 

We proceed with our proof, we let \(2 \leq j, k \leq 4\) and \(j \ne k\)
\[g_{RF}(v_1, v_1)= f^{-1}f_0 = 1 + O(|x|^{-1/\beta}), \hspace{2mm} g_{RF}(v_j, v_j)= f_0^{-1} f + O(|x|^{-2/\beta}) = 1 + O(|x|^{-1/\beta})  \]

\[g_{RF}(v_1, v_j)= f^{-1} O(|x|^{-1-1/\beta})= O(|x|^{-1/\beta}) , \hspace{2mm} g_{RF}(v_j, v_k)= O(|x|^{-2/\beta}) . \]
Similarly, we can show that \( |\Phi^{\ast}\omega_{RF} - \omega_F|_{g_F} = O(\rho^{-2/\beta}) \) and therefore \( |\Phi^{\ast}I - I|_{g_F} = O(\rho^{-2/\beta}) \).

\begin{remark}
	We can include derivatives in the statement of Item \ref{Item 2}, as \(| \nabla_X (\Phi^{*}g_{RF}-g_F)|_{g_F} = O(\rho^{-2/\beta -1}) \) and so on; but care must be taken in not to differentiate in transverse directions to the cone singularities more than once. 
\end{remark}

\subsection{Energy} \label{energy section}

It follows from \ref{curv op GH met} that the curvature operator of \(g_{RF}\) is given, up to the \(-f/2\) factor, by the trace free part of the Hessian of \(f^{-2}\) -with respect to the \(g_{\beta}\) metric-. In particular, close to the conic the curvature behaves as \(r^{1/\beta-2}\) and this is unbounded when \(\beta>1/2\). The norm-square of the curvature operator is \(O(r^{2/\beta-4})\). Comparison with the integral \(\int_{0}^{1} r^{2/ \beta-3}dr < \infty \) shows that \(|\mbox{Rm}(g_{RF})|^2 \) is locally integrable.

According to our formula \ref{energy distribution}, 
\[ |\mbox{Rm}(g_{RF})|^2 = \frac{1}{4f} \Delta_{\beta} \Delta_{\beta} f^{-1} .\]
We want to compute \(\int_{\mathbf{C}^2} |\mbox{Rm}(g_{RF})|^2\).
We note that \( \Pi: (\mathbf{C}^2, g_{RF}) \to (\mathbf{R}^3, f \cdot g)\) is a Riemannian submersion whose fiber over \(x\) is a circle of length \( 2\pi f^{-1/2}(x)\). The volume form of  \( f \cdot g\) is \(f^{3/2}dV_{\beta}\) and it is easy to conclude that
\begin{equation} \label{en proof 1}
	 \| \mbox{Rm}(g)\|_{L^2}^2 = (\pi /2) \int_{\mathbf{R}^3} \Delta_{\beta} \Delta_{\beta} f^{-1} dV_{\beta} . 
\end{equation}
In order to compute this quantity we use Stokes' theorem 
\[ \int_{\Omega} \Delta_{\beta} \Delta_{\beta} f^{-1} dV_{\beta} = \int_{\partial \Omega} \langle D \Delta_{\beta} f^{-1}, \nu \rangle dA_{\beta}  \] 
for an increasing sequence of domains \(\Omega\). 

There are two key lemmas

\begin{lemma}
	Let \(C_r\) be a bounded cylinder consisting of  points which are at distance \(r\) from the singular set \(S= \{0\} \times \mathbf{R} \). Then  
	\[ \lim_{r \to 0} \int_{C_r} \langle D \Delta_{\beta} f^{-1}, \nu \rangle dA_{\beta} =0 \]
\end{lemma}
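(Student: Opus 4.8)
The plan is to estimate the integrand $\langle D \Delta_\beta f^{-1}, \nu\rangle$ on the cylinder $C_r$ using the polyhomogeneous expansion of $\Gamma_p$ near the edge $S$, since $f = 2\pi \Gamma_p$. Recall from \ref{expansion} that near $S$ we may write $f = \sum_{j,k\ge 0} b_{j,k}(s) r^{(k/\beta)+2j}\cos(k\theta)$ with smooth coefficients, the series converging uniformly for $r \le 1/4$; in particular $f$ is $\beta$-smooth, meaning it is a smooth function of $(r^{1/\beta}e^{i\theta}, r^2, s)$, and it is bounded away from $0$ near $S$ (it tends to the positive value $b_{0,0}(s) = 2\pi F(0,0,s) = (\beta^{-1}-1)/(2|(0,0,s)-p|) \cdot 2\pi/(4\pi)$, which is positive by Lemma \ref{positive lemma}). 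Then $f^{-1}$ is also $\beta$-smooth, hence so is $\Delta_\beta f^{-1}$. The key point is the scaling: in the cone metric $g_\beta$, a derivative $D$ applied to a $\beta$-smooth function $\phi$ produces at worst a term of size $r^{1/\beta - 1}$ coming from the lowest nontrivial power $r^{1/\beta}\cos\theta$ (the $x_1$-like coordinate), together with terms of size $r$ from the $r^2$-dependence; and applying $\Delta_\beta$ once more either kills the harmonic pieces or lowers the exponent by $2$. So I would show $|D\Delta_\beta f^{-1}|_{g_\beta} = O(r^{1/\beta - 2})$ as $r \to 0$, uniformly for $s$ in the bounded range defining $C_r$.

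Concretely, I would first expand $f^{-1}$ as a $\beta$-smooth series $f^{-1} = \sum_{j,k} c_{j,k}(s) r^{(k/\beta)+2j}\cos(k\theta)$ (this follows because $f$ has such an expansion with $f(0) \neq 0$, and the reciprocal of a convergent polyhomogeneous series with nonvanishing constant term is again one of the same type). Next, apply $\Delta_\beta = \partial_r^2 + r^{-1}\partial_r + \beta^{-2}r^{-2}\partial_\theta^2 + \partial_s^2$ term by term: the operator $L = \partial_r^2 + r^{-1}\partial_r + \beta^{-2}r^{-2}\partial_\theta^2$ sends $r^{(k/\beta)+2j}\cos(k\theta)$ to $[(k/\beta+2j)^2 - k^2/\beta^2]\, r^{(k/\beta)+2j-2}\cos(k\theta) = 4j(j + k/\beta)\, r^{(k/\beta)+2j-2}\cos(k\theta)$, which vanishes when $j=0$ and otherwise lowers the radial exponent by $2$; the $\partial_s^2$ piece preserves exponents. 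Hence $\Delta_\beta f^{-1}$ is again $\beta$-smooth, with lowest surviving radial exponent $\min(1/\beta, 2)$ coming from the $j=0, k$ terms hit by $\partial_s^2$ and the $j=1$ terms hit by $L$. A further derivative $D$ in the $g_\beta$-orthonormal frame $\{\partial_r, (\beta r)^{-1}\partial_\theta, \partial_s\}$ then lowers this by at most one more power of $r$, giving $|D\Delta_\beta f^{-1}|_{g_\beta} = O(r^{\min(1/\beta,2)-2}) = O(r^{1/\beta - 2})$ when $\beta > 1/2$ (and bounded when $\beta \le 1/2$), uniformly on the relevant bounded range of $s$.

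Finally I would put the estimate together with the geometry of $C_r$: the cylinder $C_r = \{(\,r e^{i\theta}, s) : s \in [s_0, s_1]\}$ has area $\int_{C_r} dA_\beta = \int_{s_0}^{s_1}\int_{-\pi}^{\pi} \beta r\, d\theta\, ds = 2\pi\beta r (s_1-s_0)$, which is $O(r)$. Also $f$ is bounded below near $S$, so this does not affect the bound (though $f$ itself does not even enter the surface integral $\int_{C_r}\langle D\Delta_\beta f^{-1}, \nu\rangle dA_\beta$ directly). Therefore
\[
\left| \int_{C_r} \langle D\Delta_\beta f^{-1}, \nu \rangle\, dA_\beta \right| \le \sup_{C_r} |D\Delta_\beta f^{-1}|_{g_\beta} \cdot \mathrm{Area}(C_r) = O(r^{1/\beta - 2}) \cdot O(r) = O(r^{1/\beta - 1}),
\]
and since $1/\beta - 1 > 0$ for $0 < \beta < 1$, this tends to $0$ as $r \to 0$.

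The main obstacle I anticipate is justifying that one may differentiate the polyhomogeneous expansion of $f$ (and of $f^{-1}$) term by term and that the resulting bounds hold uniformly in $s$ — this needs the uniform convergence of \ref{expansion} together with control of its $s$-derivatives, which in turn rests on the Bessel-function analysis of \cite{DonaldsonKMCS}. An alternative, cleaner route that sidesteps the expansion is to invoke directly the $\beta$-smoothness of $f$: since $f$ is a smooth function of $(\zeta, r^2, s)$ with $\zeta = r^{1/\beta}e^{i\theta}$ and $f(0,0,s) > 0$, the same is true of $f^{-1}$ and of $\Delta_\beta f^{-1}$, and a $\beta$-smooth function $\phi$ automatically satisfies $|D\phi|_{g_\beta} = O(r^{1/\beta - 1})$ near $S$ simply because $\phi$ depends on the non-smooth variable $\zeta$ only, while $|\partial \zeta / \partial(\text{unit frame})| = O(r^{1/\beta-1})$; iterating gives the needed $O(r^{1/\beta-2})$ for $D\Delta_\beta f^{-1}$. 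Either way the conclusion follows from the area bound $O(r)$ on $C_r$.
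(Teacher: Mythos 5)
Your overall strategy --- a pointwise bound on the integrand near \(S\) coming from the polyhomogeneous structure of \(f\), combined with \(\mathrm{Area}(C_r)=O(r)\) --- is the same as the paper's, and your conclusion is correct. Where you differ is in how the pointwise bound on \(D\Delta_{\beta}f^{-1}\) is obtained. The paper does it more cheaply: since \(\Delta_{\beta}f=0\), one has \(\Delta_{\beta}f^{-1}=2f^{-3}|Df|^2\), so only \emph{first} derivatives of \(f\) enter, and \(\beta\)-smoothness immediately gives \(|D\Delta_{\beta}f^{-1}|=O(r^{2/\beta-3})\) and an integral of size \(O(r^{2/\beta-2})\to 0\). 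This identity is precisely the device that beats the naive ``three raw derivatives'' count; your main route instead extracts the same gain from the eigenvalue structure of \(\Delta_{\beta}\) acting on the expansion, which is a legitimate alternative.

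However, two steps in your write-up need repair. First, the reciprocal of the series \ref{expansion} is \emph{not} a series of the same type: products of terms generate, e.g., \(r^{2/\beta}\) (from \(\cos\theta\cdot\cos\theta\)), and more generally terms \(r^{m/\beta+2j}\cos(l\theta)\) with \(l<m\). These are not annihilated by \(L\) even when \(j=0\); for instance \(L(r^{2/\beta})=(2/\beta)^{2}r^{2/\beta-2}\neq 0\). So your claim that the lowest surviving \(r\)-dependent exponent of \(\Delta_{\beta}f^{-1}\) is \(\min(1/\beta,2)\) is false: for \(\beta>1/2\) the dominant singular term is \(O(r^{2/\beta-2})\), whose derivative is \(O(r^{2/\beta-3})\), matching the paper's bound. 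You are saved by arithmetic luck --- \(2/\beta-3\geq 1/\beta-2\) exactly when \(\beta\leq 1\), so \(r^{2/\beta-3}=O(r^{1/\beta-2})\) and your claimed bound, hence your final estimate \(O(r^{1/\beta-1})\to 0\), still holds --- but the justification as written is incorrect and should be replaced either by the corrected term-by-term analysis or by the identity \(\Delta_{\beta}f^{-1}=2f^{-3}|Df|^2\). (Minor: the purely \(s\)-dependent pieces such as \(c_{0,0}''(s)\) also survive with radial exponent \(0\); they contribute only \(O(1)\) to the integrand and \(O(r)\) to the integral, so they are harmless but should not be omitted.) Second, your ``cleaner alternative'' at the end does not work as stated: \(D\Delta_{\beta}f^{-1}\) is three derivatives of \(f^{-1}\), so the crude rule ``each derivative in the unit frame costs a factor \(r^{-1}\) beyond \(r^{1/\beta}\)'' yields only \(O(r^{1/\beta-3})\), and \(r^{1/\beta-3}\cdot r=r^{1/\beta-2}\not\to 0\) for \(\beta>1/2\). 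One genuinely needs the structure of \(\Delta_{\beta}\) --- via the expansion or via harmonicity of \(f\) --- to gain the missing power of \(r\).
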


\begin{proof}
	The lemma is a consequence of the \(\beta\)-smoothness of \(f^{-1}\) together with the fact that \(\triangle_{\beta} f =0 \). Indeed, since \(f\) is harmonic, \( \triangle_{\beta} f^{-1} = 2 f^{-3} |Df|^2 \). The \(\beta\)-smoothness then gives us \(|D \Delta_{\beta} f^{-1}| = O (r^{2\beta^{-1} -3}) \) and therefore \(\int_{C_r} \langle D \Delta_{\beta} f^{-1}, \nu \rangle dA_{\beta} =  O (r^{2\beta^{-1} -2}) \).
\end{proof}

\begin{lemma}
Let \(S_R\) denote the sphere of points which are at distance \(R\) from \(0\). Then 
\[ \lim_{R \to \infty} \int_{S_R} \langle D \Delta_{\beta} (f^{-1} -f_0^{-1}), \nu \rangle dA_{\beta} =0 \]	
\end{lemma}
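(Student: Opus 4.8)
The plan is to bound the integrand pointwise and use that the spheres $S_R$ have area $O(R^2)$ in the metric $g_\beta$. Set $\Psi := \Delta_\beta(f^{-1}-f_0^{-1})$. Since $f$ and $f_0 = 2\pi\Gamma_0 = 1/(2\beta|x|)$ are $\Delta_\beta$-harmonic, $\Delta_\beta f^{-1} = 2f^{-3}|Df|^2$ and likewise for $f_0$, so
\[
\Psi = 2\big[(f^{-3}-f_0^{-3})|Df|^2 + f_0^{-3}\big(|Df|^2 - |Df_0|^2\big)\big].
\]
The input is the decay of the difference $h := f - f_0 = 2\pi(\Gamma_p - \Gamma_0)$ and of its derivatives. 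We already know $|h| = O(|x|^{-1-1/\beta})$ from Section~\ref{green function section}; and outside a fixed ball $h$ is $\Delta_\beta$-harmonic and $\beta$-smooth, so rescaling by the homogeneity $G(m_\lambda\cdot,m_\lambda\cdot) = \lambda^{-1}G$ and invoking interior estimates (equivalently, differentiating the expansion \eqref{expansion} term by term) gives that all derivatives of $h$ decay at the matching rate, i.e. $|D^k h| = O(|x|^{-1-1/\beta-k})$ away from the edge $S$. Feeding these bounds — together with $f, f_0 \asymp |x|^{-1}$ and $|D^k f|, |D^k f_0| = O(|x|^{-1-k})$ — into the identity above, and into its first derivative, yields, away from $S$,
\[
\Psi = O(|x|^{-1-1/\beta}), \qquad D\Psi = O(|x|^{-2-1/\beta}).
\]

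Granting this, $\big|\int_{S_R}\langle D\Psi,\nu\rangle\,dA_\beta\big| \le \int_{S_R}|D\Psi|\,dA_\beta$, and I would split $S_R$ into the ``bulk'' $\{r \ge 1\}$ and the two small caps $\{r \le 1\}$ lying over the points $(0,0,\pm R)$ where $S$ meets $S_R$. On the bulk, $|D\Psi| = O(R^{-2-1/\beta})$ and $\mathrm{Area}(S_R) = O(R^2)$, so this part contributes $O(R^{-1/\beta}) \to 0$. On a cap the functions are only $\beta$-smooth, so $D\Psi$ acquires an angular weight: differentiating the $r^{k/\beta}\cos k\theta$ terms of the expansion \eqref{expansion} brings in a factor $r^{1/\beta - 3}$; the induced area element there is $\sim \beta\, r\, dr\, d\theta$, and $\int_0^1 r^{1/\beta - 2}\,dr < \infty$ precisely because $\beta < 1$, while tracking the anisotropic rescaling $m_R$ turns the weight into a power $R^{1-2/\beta}$. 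Since $1 - 2/\beta < 0$, the cap contribution also tends to $0$, and adding the two pieces proves the lemma.

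The main obstacle is the behavior near the edge $S$: there $\Psi$ and $D\Psi$ are genuinely unbounded once $\beta > 1/2$, so no single pointwise bound on $S_R$ works. One has to (i) extract the correct mild blow-up rate for $D\Psi$ from the polyhomogeneous expansion of $\Gamma_p$, (ii) check it is integrable against the $2$-dimensional measure on $S_R \cap \{r \le 1\}$ — which is where the hypothesis $\beta < 1$ enters — and (iii), most delicately, keep simultaneous track of the $r$-power and the $R$-power under the anisotropic dilation $m_R$ so that the cap contribution really comes out as a decaying power of $R$. The bulk estimate, by contrast, is routine once the decay of $\Gamma_p - \Gamma_0$ from Section~\ref{green function section} is in hand.
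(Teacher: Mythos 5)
Your overall strategy is sound and could be made to work, but it is a genuinely different and substantially more laborious route than the paper's, and the step you yourself flag as ``most delicate'' is described rather than carried out. The paper never needs a pointwise bound on the full gradient \(D\Psi\): the flux integral only involves the directional derivative \(\langle D\Psi,\nu\rangle\), and the outward normal \(\nu\) to \(S_R\) is tangential to the edge \(S\) --- near \(S\) one has \(\nu=(r/|x|)\partial_r+(s/|x|)\partial_s\), and the coefficient \(r/|x|\) of the singular direction vanishes on \(S\) and compensates the angular blow-up elsewhere. Differentiating only in such directions never picks up the singular weight, so the single uniform estimate \(\langle D\Psi,\nu\rangle=O(|x|^{-1/\beta-1})\) holds on all of \(S_R\); combined with \(\mbox{Area}(S_R)=O(R^2)\) the lemma follows in one line, with no bulk/cap decomposition and no integrability-of-the-weight argument. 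This observation dissolves exactly the obstacle you identify as the main difficulty.

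Two concrete remarks on your version, should you wish to keep it. First, your exponent bookkeeping near the edge is not right: the worst singular term of \(D\Psi\) comes from differentiating the \(r^{2/\beta-2}\) term of \(|Df|^2\) (produced by \((\beta r)^{-2}(\partial_\theta f)^2\) with \(f=a_{00}(s)+a_{01}(s)r^{1/\beta}\cos\theta+\cdots\)), giving a weight \(r^{2/\beta-3}\); the terms \(r^{k/\beta}\cos k\theta\) of the expansion \ref{expansion} themselves are annihilated by the two-dimensional part of \(\Delta_\beta\) and contribute only bounded quantities after one further derivative. Your weight \(r^{1/\beta-3}\) happens to be a still-integrable overestimate, so the conclusion survives, but the stated derivation does not. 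Second, the claim that the cap contributes \(O(R^{1-2/\beta})\) is precisely the computation that has to be done: one must rescale by \(m_{1/R}\), use \(G(m_\lambda p,m_\lambda q)=\lambda^{-1}G(p,q)\), and note that since \(\Gamma_0\) has no \(\cos k\theta\) modes for \(k\geq 1\), the singular coefficients of \(\Gamma_p-\Gamma_0\) restricted to the unit sphere are themselves \(O(R^{-1/\beta})\). As written this is asserted, not proved, so your argument is a correct plan with its hardest step still open; projecting \(D\Psi\) onto \(\nu\) first is the far cheaper fix.
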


\begin{proof}
	
	\[\Delta_{\beta} (f^{-1}-f_0^{-1}) = f^{-3}|Df|^2 - f_0^{-3}|Df_0|^2 = f_0^{-3} |Df_0|^2 \left( (f/f_0)^{-3} (|Df|/|Df_0|)^2 -1 \right)  \]
	\(f_0^{-3}|Df_0|^2=O(|x|^{-1}) \) and \(f/f_0 = 1 + O(|x|^{-1/\beta}) \). On the other hand, \( | |Df|^2 - |Df_0|^2 | \leq (|Df| + |Df_0|) | D(f-f_0)| \) implies that \(|Df|^2/|Df_0|^2 = O (|x|^{1-1/\beta})\). We conclude that \( \Delta_{\beta}(f^{-1}-f_0^{-1}) = O(|x|^{-1/\beta}) \) 
	
	Note that \(\nu\) is tangential to \(S\), so that $\langle D \Delta_{\beta} (f^{-1} -f_0^{-1}), \nu \rangle = O(|x|^{-1/\beta -1}) $. We deduce that the integral is \(O(|x|^{1-1/\beta})\).

\end{proof}

It follows easily from these results that 
\begin{equation} \label{en proof 2}
	 \int_{\mathbf{R}^3} \Delta_{\beta} \Delta_{\beta} f^{-1} dV_{\beta} = \lim_{R \to \infty} \int_{S_R(0)} \langle D \Delta_{\beta} f_0^{-1}, \nu \rangle dA_{\beta} - \lim_{\epsilon \to 0} \int_{S_{\epsilon}(p)} \langle D \Delta_{\beta} f^{-1}, \nu \rangle dA_{\beta} .
\end{equation}
Finally, 

\begin{itemize}
	\item \[\lim_{\epsilon \to 0} \int_{S_{\epsilon}(p)} \langle D \Delta_{\beta} f^{-1}, \nu \rangle dA_{\beta} = -16 \pi .\] 
	Indeed, \( g\) is isometric to the Euclidean metric in a neighborhood of \(p\) and we reduce to the standard situation where \(f= 1/2|x| \). We compute in spherical coordinates to obtain \(\Delta |x| = 2 |x|^{-1} \) and \( \int_{S} \langle D(1/|x|), \nu \rangle dA= -4\pi, \)  for any sphere \(S\) centered at \(0\). 
	
	\item \[\lim_{R \to \infty} \int_{S_R(0)} \langle D \Delta_{\beta} f_0^{-1}, \nu \rangle dA_{\beta} = -\beta^2 16 \pi .\]
	This goes along the same lines as in the previous item, replacing the Euclidean metric with \(g\). The extra factor \(\beta^2\) comes from \(f_0^{-1}= 2 \beta |x| \) and \(dA_{\beta}= \beta dA\).
\end{itemize}

We put \ref{en proof 1} and \ref{en proof 2} together, to obtain our desired formula

\begin{equation}
	\| \mbox{Rm}(g_{RF}) \|^2_{L^2} = 8 \pi^2 (1-\beta^2) .
\end{equation}

\section{Additional comments} \label{additiona comments}

\subsection*{Curvature and directions}
As we mentioned, the curvature of \(g_{RF}\) is unbounded at points of the conic when \(\beta>1/2\); and  it is conjectured that this is the case for any K\"ahler-Einstein metric. We can also ask in which directions the curvature blows-up. We can be more precise with the concept of direction at points of the curve \(C\). We consider the \(\mathbf{CP}^1\)-bundle of directions, \(P\), over \(\mathbf{C}^2\); whose fiber over \(x\) is \(\mathbf{P}(T_x \mathbf{C}^2)\). The metric \(g_{RF}\) is smooth in the complement of \(C\) and taking orthogonal complements defines an automorphism, \(\perp\), of \(P\) over that region. The point is that \(\perp\) extends continuously over all of \(P\) and therefore there is a well-defined notion of a normal direction to the curve \(C\). 

On the other hand it is a general fact that on Einstein \(4\)-manifolds the sectional curvatures of mutually orthogonal planes agree, indeed the Einstein condition is equivalent to the commutativity of the  curvature operator with the Hodge star. We can provide a simple proof for the Ricci-flat K\"ahler case: Take normal coordinates at \(p\), so that \(g_{1\overline{1}}g_{2\overline{2}} - |g_{1\overline{2}}|^2 = e^F \) for some pluri-harmonic function \(F\) whose gradient vanishes at \(p\). Differentiate with respect to \(z_1\), \(\overline{z_1}\) and evaluate at \(p\) to obtain \( g_{1\overline{1}, 1\overline{1}} + g_{2\overline{2}, 1\overline{1}} =0\); similarly differentiating with respect to \(z_2,\overline{z_2}\) we obtain \( g_{1\overline{1}, 2\overline{2}} + g_{2\overline{2}, 2\overline{2}} =0\). It follows that \(  g_{1\overline{1}, 1\overline{1}} = g_{2\overline{2}, 2\overline{2}} \); which is to say that the sectional curvature of the \(\partial/ \partial z_1 \)
and \(\partial/ \partial z_2 \) planes at \(p\) agree.

We go back to our setting, \(g_{RF}\) is smooth in tangent directions to \(C\) and it induces on it the metric of a rotationally symmetric negatively curved cylinder. The sectional curvature of \(g_{RF}\) remains bounded as we approach the curve in either tangential or normal directions. The upshot is that if \(p \in C \) and \(P_p \cong \mathbf{CP}^1 \) with \(\perp\)  the standard \(\xi \to -1/\overline{\xi} \) and the tangent and normal directions to the curve corresponding to the North and South poles; then the sectional curvature is invariant under \(\perp\), bounded around the poles and unbounded around the equator.

\subsection*{Quotients and limits when \( \beta \to 0 \)} \footnote{I want to thank Hans-Joachim Hein for discussions related to the content of this section.} 
We study the case when \(\beta=1/n\) with \(n\) a positive integer \(\geq 2\). The Green's function is explicit in this case, given by the Neumann reflection trick
\[ \Gamma_p (x) = \frac{1}{4\pi} \sum_{j=0}^{n-1} \frac{1}{|x-p_j|}, \]
with \(x= (r e^{i\tilde{\theta}}, s)\) and \(p_j=(e^{2\pi i(j/n)}, 0)\).

Let \(X\) be the standard \(A_{n-1}\)-ALE space determined by \(p_0, \ldots, p_{n-1}\). The orthogonal transformation that fixes the \(s\)-axis and rotates by \(\pi/n\) has a lift as an isometry of \(X\), this lift is unique if we require that it fixes the points over the axis. This isometry generates a \(\mathbf{Z}_n\)-action and the quotient space is then identified with \((\mathbf{C}^2, g_{RF})\). When \(n=2\), \(X\) is the Eguchi-Hanson space and there is an explicit K\"ahler potential for \(g_{RF}\) which -up to a constant factor- is given by \(\phi = (|z|^2 + |w|^2 + |1-zw| +1)^{1/2}\).

We let \(g_n \) denote the metric \(g_{RF}\) with \(\beta=1/n\). We want to know the possible Gromov-Hausdorff limits of the sequence \(\{g_n\}\) as \(n \to \infty\). First of all we note that \(g_n\) are complete, in the sense that Cauchy sequences with respect to the induced distance  converge. Indeed the standard proof for Gibbons-Hawking spaces applies in our case -see \cite{AKL}-, the point being that the bundle projection is a Riemannian submersion onto a complete space. Since we are dealing with non-compact spaces we must choose points and talk about pointed Gromov-Hausdorff limits. We choose the points to be the ones fixed by the \(S^1\)-action. As we shall see the curvature of \(g_n\) blows-up at this point and if we re-scale in order to keep it bounded we obtain the Taub-Nut metric in the limit, in symbols \( (\mathbf{C}^2, \lambda_n g_n, 0) \to (\mathbf{C}^2, g_{TN}, 0) \) with \(\lambda_n \approx |\mbox{Rm}(g_n)|\) and 
\[ 0< \lim_{n \to \infty} \frac{|\mbox{Rm}(g_n)|(0)}{n \log n} < \infty . \]

We consider a unit circle in the plane with \(n\) points equally separated. If we fix one of this points and consider the sum of the inverse distances to the others, then -up to a constant factor- the sum is \(n(1+1/2+\ldots+1/n)\). We go back to the sequence \(g_n\), with the marked points mapping to \(0 \in \mathbf{R}^3\) and conclude that in a small ball we can write the harmonic functions as \(1/2|x| + (n \log n) F_n \) with \(F_n\) converging uniformly to a positive constant. It is then easy to derive the claims made in the previous paragraph.

It is worth to point out that in the previous limit we are magnifying a neighborhood of \(0\) and pushing-off the cone singularities to infinity. On the other hand 
\[ \lim_{n \to \infty} \| \mbox{Rm}(g_{n}) \|_{L^2}^2 = \lim_{n \to \infty} 8\pi^2 (1-1/n^2)= 8 \pi^2 = \| \mbox{Rm}(g_{TN}) \|_{L^2}^2 .\]
So the metrics \(g_n\) become nearly flat, as \(n \to \infty\), around the conic. It is also tempting -by approximating large circles with a line- to compare the metrics \(g_n\) for \(n\) large with `the' Ooguri-Vafa metric \cite{GrossWilson}, obtained from the Gibbons-Hawking ansatz applied to the potential of infinitely many charges lying on a line and equally separated. However, as a word of caution, it must be said that the Ooguri-Vafa metric is \emph{not} complete. The Ooguri-Vafa metric is indeed a one parameter family of metrics, parametrized by the distance between the charges. Scaling the metrics when the parameter tends to zero at the fixed point of the \(S^1\)-action  recovers the Taub-Nut metric in the limit; we can use the triangle inequality to relate this sequence to \(\lambda_n g_n\). However one can ask for a more precise correspondence, relating their associated harmonic functions  -both admitting asymptotic expansions in terms of Bessel functions-.

We can also scale the metrics \(g_{RF}\) so that their volume forms are \(|1-zw|^{2\beta-2} \Omega \wedge \overline{\Omega}\) and then take the point-wise limit of these tensors as \(\beta \to 0\), proceeding like this results into a degenerate limit \(g_{\infty} \geq 0\). It is not known whether there is a K\"ahler metric on the complement of the conic with volume form  \(|1-zw|^{-2} \Omega \wedge \overline{\Omega}\). Note that such a metric would be necessarily complete Ricci-flat and that \(\pi_1(\mathbf{C}^2 \setminus C) \cong \mathbf{Z}\).    

\subsection*{Variants}
As mentioned in \cite{DonaldsonKMCS}, there are many variants of the construction. Finite sums of Green's functions \(\Gamma_p\) at different points give rise to Ricci-flat metrics with cone singularities on \(A_n\) manifolds. It is also possible to consider several parallel wedges an obtain metrics on \(\mathbf{C}^2\) with cone singularities along disjoint conics. Another variant is to add a positive constant  term to the Green's function to obtain analogs of (multi)-Taub-Nut spaces. 

More interesting is the case of a curve \(C \subset \mathbf{C}^2\) which is invariant under an \(S^1\)-action different from the one we considered. For example $\{w=z^2\}$ and \(\{wz^2=1\}\) are invariant under \((e^{it}z, e^{2it}w)\) and \((e^{it}z, e^{-2it}w)\) respectively. We can ask for \(S^1\)-invariant Ricci-flat metrics with cone singularities along the curve; but a suitable extension of the Gibbons-Hawking ansatz to the context of Seifert fibrations or \(S^1\)-actions which rotate the complex volume form seems not to be available. 

\bibliographystyle{plain}
\bibliography{ReferencesGH}

\end{document}